\newtheorem{theorem}{\sc Theorem}[section]
\newtheorem{thm}[theorem]{\sc Theorem}
\newtheorem{lem}[theorem]{\sc Lemma}
\newtheorem{prop}[theorem]{\sc Proposition}
\newtheorem{cor}[theorem]{\sc Corollary}
\newtheorem{remar}[theorem]{\sc Remark}
\newtheorem*{thmA}{Theorem A}
\newtheorem*{corB}{Corollary B}
\newtheorem*{thmC}{Theorem C}
\newtheorem*{thmD}{Theorem D}
\begin{document}
\title[weak commutativity]{Structural results on the Weak Commutativity construction}
\author[Bastos]{R. Bastos}
\address{Departamento de Matem\'atica, Universidade de  Bras\'ilia,
Bras\'ilia-DF, 70910-900 Brazil \newline ORCID: https://orcid.org/0000-0002-5733-519X}
\email{(Bastos) bastos@mat.unb.br}
\author[de Oliveira]{R. de Oliveira}
\address{ Instituto de Matem\'atica e Estat\'istica, Universidade Federal de Goi\'as,
Goi\^ania-GO, 74690-900 Brazil \newline ORCID:https://orcid.org/0000-0003-3151-7304}
\email{(de Oliveira) ricardo@ufg.br}
\author[Ortega]{G. Ortega}
\address{Departamento de Matem\'atica, Universidade de Bras\'ilia,
Bras\'ilia-DF, 70910-900 Brazil \newline ORCID:https://orcid.org/0009-0004-4895-0555}
\email{(Ortega) ortega@mat.unb.br }
\keywords{Finiteness conditions; double Sidki construction; lower central series; derived series; Periodic groups}

\justify
\begin{abstract}
The weak commutativity group $\chi(G)$ is generated by two isomorphic groups $G$ and $G^{\varphi }$ subject to the relations $[g,g^{\varphi}]=1$ for all $g \in G$. We obtain new expressions for the terms of the derived series and the lower central series of $\chi(G)$. We also present new bounds for the exponent of some sections of $\chi(G)$. \vspace{0,3cm} 

\noindent {\bf Mathematics Subject Classification.} 20E06, 20F05, 20F14, 20F50
\end{abstract}

\maketitle

\section{Introduction}

Let  $G^{\varphi}$ be a isomorphic copy of the group $G$, via $\varphi : G \rightarrow G^{\varphi}$, given by $g \mapsto g^{\varphi}$. In \cite{Sidki}, Sidki introduces and analyzes the {\it weak commutativity group}, $\chi(G)$, which is defined as follows \[ \chi(G) = \langle G \cup G^{\varphi} \mid [g,g^{\varphi}]=1, \forall g \in G \rangle.\]

The group $\chi(G)$ is also known as the double Sidki of $G$. Now, we recall some homomorphisms associated with the group $\chi (G)$ to describe some of its normal subgroups and sections. First of all $\chi(G)$ maps onto $G$ by $g\mapsto g$, $g^{\varphi }\mapsto g$
with kernel $L(G)=\left\langle g^{-1}g^{\varphi } \mid g\;\in G\right\rangle $ and it
maps onto $G\times G$ by $g\mapsto \left( g,1\right) ,g^{\varphi
}\mapsto \left( 1,g\right) $ with kernel $D(G)= [G,G^{\varphi}] = \langle [g,h^{\varphi}] \mid g,h \in G \rangle$. As a consequence of the defining relation of the group $\chi(G)$, the subgroups $L(G)$ and $D(G)$ commute. Furthermore, if we denote by $T(G)$ the
subgroup of $G\times G\times G$ generated by $\{(g,g,1),(1,g,g)\mid g\in G\}$, then $\chi (G)$ maps onto $T(G)$ by $g\mapsto \left( g,g,1\right)$, $g^{\varphi }\mapsto \left( 1,g,g\right) $, with kernel $W(G)=L(G)\cap D(G)$. The subgroup $W(G)$ is central in $ L(G)D(G)$ and $\chi(G)/W(G)$ is isomorphic to a subgroup of $G \times G \times G$. Another normal subgroup of $\chi (G)$ is $R(G)=
[G,L(G),G^{\varphi}] \leq W(G)$. The weak commutativity group has some links with homology. It is worth pointing out that the section $W(G)/R(G)$ is isomorphic to the Schur multiplier $M(G)$ (cf. \cite[Lemma 4.1.11]{Sidki} and \cite[Lemma 2.2]{Roc82}). See also \cite{Miller}. Moreover, the section $D(G)/R(G)$ is isomorphic to the non-abelian exterior square $G \wedge G$. See \cite{BdMdO}, \cite{BdMGN}, \cite{BdOLN}, \cite{BK}, \cite{BK1}, \cite{DK}, \cite{GRS}, \cite{KM}, \cite{KS}, \cite{LO}, \cite{Mend}, \cite{Roc82} and \cite{Sidki} for a  rich source  of results  regarding this construction. 

In the present article we give more information on the structure of the group $\chi(G)$. The key approach was to describe a certain subgroup of $R(G)$.

The structure of the subgroup $R(G)$ is not yet completely understood, making it an obstacle to understanding the group $\chi(G)$. In \cite{Sidki}, Sidki described $R(G)$ for cases where $G$ is either an abelian or a perfect group. Specifically, he showed that if $G$ is an abelian group, then $R(G)^2=1$. Later, in \cite{KM}, Kochloukova and Mendon\c ca proved that if $G$ is a $2$-generated group, then $R(G)=1$ (see also \cite[Remark 3.2~(ii)]{BdOLN}). We obtain the following related results. 

\begin{thmA} \label{thm:R}
Let $G$ be a group. 
\begin{enumerate}[(a)]
    \item If $G$ is periodic, then $R(G)$ is periodic.
    \item If $G$ has finite exponent, then $\exp(R(G))$ divides $\exp(G) \cdot \exp(G')$.
\end{enumerate}
\end{thmA}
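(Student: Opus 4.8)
The plan is to work inside $\chi(G)$ and exploit the commutator identities that define $R(G)=[G,L(G),G^\varphi]$ together with the fact (noted in the introduction) that $L(G)$ and $D(G)$ commute and that $W(G)\ge R(G)$ is central in $L(G)D(G)$. First I would record the basic generators: $R(G)$ is normally generated by elements of the form $[[g,h^{-1}h^\varphi],k^\varphi]$ with $g,h,k\in G$, equivalently $[g,h^{-1}h^\varphi,k^\varphi]$, and since $W(G)$ is central in $L(G)D(G)$ these generators are already central in $L(G)D(G)$, so $R(G)$ is abelian and in fact $R(G)\le Z(L(G)D(G))$. Hence for part (a) it suffices to show each such generator has finite order when $G$ is periodic, and for part (b) to bound that order by $\exp(G)\exp(G')$; the subgroup generated by them is then a quotient of a direct sum of cyclic groups of those orders, giving the stated exponent bound, and periodicity follows a fortiori.

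The heart of the matter is a single "conjugation/collapsing" computation. The key point is that modulo $R(G)$ one has $[g,h^\varphi]\equiv[g,h]^\varphi\equiv[g,h]$ type congruences (this is exactly the statement $D(G)/R(G)\cong G\wedge G$ with the two copies of $G'$ identified), so one can move the $\varphi$-decorations around freely at the cost of staying inside $R(G)$. I would expand $[g,h^{-1}h^\varphi,k^\varphi]$ using the Hall–Witt identity and the commutator calculus, writing $h^{-1}h^\varphi=[h,\varphi]$ in the obvious formal sense, and push everything to show that raising $g$ (or $k$) to the power $\exp(G)$ sends the generator to $1$, while a second, independent reduction shows that the order also divides $\exp(G')$ because the "inner" part $[g,h^{-1}h^\varphi]$ lies in $L(G)$ and its relevant image is controlled by $G'=[G,G]$. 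Concretely I expect a lemma of the shape $[g,h^{-1}h^\varphi,k^\varphi]^{n}=[g^{n},h^{-1}h^\varphi,k^\varphi]\cdot(\text{terms that vanish})$ when $g^n$ is central enough, obtained by induction on $n$ using centrality of $R(G)$ in $L(G)D(G)$.

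For part (b) the cleanest route is probably: (i) show $\exp(R(G))\mid \exp(G)$ applied to one slot — say the last, $k^\varphi$ — by a direct commutator expansion using that $[k^\varphi]^{\exp(G)}=1$ and $R(G)$ central; then (ii) observe the remaining dependence is through $[g,h^{-1}h^\varphi]\in L(G)$, whose image modulo the relevant central subgroup is annihilated by $\exp(G')$ because $L(G)/(\text{center})$ embeds in something built from $G$ where the commutator $[g,h]$-part has exponent dividing $\exp(G')$. Combining the two multiplicatively (the bilinear-like behaviour of the generator in its slots, once $R(G)$ is known abelian, lets one split the annihilator) yields $\exp(R(G))\mid \exp(G)\exp(G')$. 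Part (a) is then immediate: if $G$ is periodic every generator $[g,h^{-1}h^\varphi,k^\varphi]$ involves only the finite subgroup $\langle g,h,k\rangle$, to which part (b)'s argument applies with $\exp$ replaced by $\exp(\langle g,h,k\rangle)$, so each generator has finite order and $R(G)$, being abelian and generated by torsion elements, is periodic.

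I expect the main obstacle to be step (ii) of part (b): making precise and correct the claim that the $L(G)$-part of the generator contributes only a factor of $\exp(G')$ rather than $\exp(G)$. This requires carefully identifying how $[g,h^{-1}h^\varphi]$ sits inside $L(G)$ modulo $W(G)$ and tracking which commutator collapses are legitimate; a naive expansion will only give $\exp(G)^2$ or $\exp(G)\exp(L(G))$, and squeezing it down to $\exp(G')$ will need the finer structure of $L(G)$ — likely the known fact that $L(G)/[L(G),G]$ and related sections are governed by $G^{ab}$ while the commutator $[g,h]$-dependence lives in $G'$. The commutator gymnastics in the Hall–Witt step, done carefully with $R(G)$ already known central, should otherwise be routine.
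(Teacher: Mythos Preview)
Your overall strategy---show that $R$ is abelian and then bound the order of each generator $[\,[g,\varphi],a,b^{\varphi}]$---is the right shape, but the key commutator identity you are hoping for does not hold on the nose, and the paper's proof hinges precisely on identifying the obstruction.

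Concretely, the lemma you expect, namely
\[
[\,[g,\varphi],a,b^{\varphi}]^{n}=[\,[g^{n},\varphi],a,b^{\varphi}]\cdot(\text{terms that vanish}),
\]
is false in general: the error terms do not vanish but land in the subgroup $L_{1,2}:=[L_1,L_2]=[[L,G],[L,G^{\varphi}]]\le R$. This is exactly the content of the paper's Lemma~2.4(i). Your alternative route through the last slot also fails: the identity $[x,k^{\varphi}]^{n}=[x,(k^{n})^{\varphi}]$ for $x\in L_1$ would require $[R,G^{\varphi}]=1$, but $R$ is central only in $LD$, not in $\chi(G)$, and $[R,G^{\varphi}]$ is nontrivial in general (cf.\ the paper's Corollary~5.5). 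So neither slot gives you a clean power identity inside $R$ itself.

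The paper's remedy is to pass to the two-step filtration $L_{1,2}\le R$. Modulo $L_{1,2}$ the identity above \emph{does} hold, giving $\exp(R/L_{1,2})\mid\exp(G)$; then a separate computation shows that generators of $L_{1,2}$ satisfy $\big[[a,\varphi,b],[g,\varphi,h^{\varphi}]\big]^{n}=\big[[a,\varphi,b],[g^{\varphi},h^{\varphi}]^{n}\big]$, whence $\exp(L_{1,2})\mid\exp(G')$. The product of the two bounds gives $\exp(R)\mid\exp(G)\cdot\exp(G')$. Your step~(ii) intuition that ``the $G'$-factor should come from the inner commutator'' is morally correct, but it materialises as a bound on $L_{1,2}$, not as a direct bilinearity in the $L$-slot of the original generator.

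Finally, your reduction for part~(a) is not valid as stated: for a periodic group $G$ the subgroup $\langle g,h,k\rangle$ need not be finite (Burnside-type examples), so you cannot invoke part~(b) on it. The paper instead observes that the same identities above bound the order of each generator of $R/L_{1,2}$ by the order of $g$, and the order of each generator of $L_{1,2}$ by the order of $[g,h]$, which suffices since both quotients are abelian.
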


We do not know whether the bound obtained in Theorem A~(b) is sharp. At this moment, we have no examples that attain the bound in Theorem A~(b). See Remark \ref{rem:example} for more details. It is worth to mentioning that in the proof of Theorem A was crucial to understand the subgroup $L_{1,2}(G) := [L_1(G), L_2(G)] \leq R(G)$ (see Theorem \ref{TL12FG}, below). In the next result, we present more information on the subgroup $L_{1,2}(G)$. 

\begin{corB}
Let $G$ be a finitely generated periodic group. Then $L_{1,2}(G)$ is finite.     
\end{corB}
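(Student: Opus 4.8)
The plan is to derive Corollary~B from three properties of the group $L_{1,2}(G)$: that it is abelian, that it is periodic, and that it is finitely generated. Since a finitely generated torsion abelian group is finite, proving these three properties proves the corollary.

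The first two are immediate from facts recalled in the Introduction. By definition $L_{1,2}(G)=[L_1(G),L_2(G)]\le R(G)$, and $R(G)\le W(G)$, where $W(G)=L(G)\cap D(G)$ is a subgroup of $L(G)D(G)$ that is central in $L(G)D(G)$; hence $W(G)$ is abelian, and therefore so are $R(G)$ and $L_{1,2}(G)$. For periodicity, the hypothesis that $G$ is periodic lets us apply Theorem~A~(a), so $R(G)$ is periodic, and then so is its subgroup $L_{1,2}(G)$. This leaves the finite generation of $L_{1,2}(G)$, and here we invoke Theorem~\ref{TL12FG}: as $G$ is finitely generated, so is $L_{1,2}(G)$, which completes the proof.

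The substance thus lies entirely in Theorem~\ref{TL12FG}, and for orientation I sketch the mechanism behind it. Fix a finite generating set $x_1,\dots,x_d$ of $G$; then $L_1(G)$ and $L_2(G)$ are generated, as normal subgroups of $\chi(G)$, by finitely many explicit words in $x_1,\dots,x_d$ and $x_1^{\varphi},\dots,x_d^{\varphi}$. Since $R(G)$ is central in $L(G)D(G)$, conjugation by elements of $L(G)D(G)$ acts trivially on $L_{1,2}(G)$, so the commutator pairing is bilinear on the relevant arguments; consequently an arbitrary element of $L_{1,2}(G)$ can be expanded as a product of commutators of the chosen generators together with conjugates of these, with the conjugations coming from $L(G)D(G)$ disappearing. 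The conjugations by $G$ and $G^{\varphi}$ that remain are reabsorbed by standard commutator identities and by the normality of $R(G)$ in $\chi(G)$, and one is left with a finite generating set indexed essentially by the pairs $(i,j)$. The main obstacle is precisely this last step --- ensuring that only finitely many generators are needed, i.e.\ that the residual conjugates do not contribute infinitely many independent elements: a normal subgroup of a finitely generated group is in general very far from finitely generated, so the argument cannot be formal and must exploit the distinguished position of $L_{1,2}(G)$ inside the centre $W(G)$ of $L(G)D(G)$, which is what makes the commutator pairing bilinear and the generating set finite.
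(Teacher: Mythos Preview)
Your overall strategy matches the paper's: show $L_{1,2}(G)$ is abelian, periodic, and finitely generated, and conclude. The abelianness and periodicity steps are fine. The gap is in your invocation of Theorem~\ref{TL12FG}. That theorem's part~(b) assumes that \emph{both} $G$ and $G'$ are finitely generated; you only assert ``as $G$ is finitely generated, so is $L_{1,2}(G)$'', which is not what the theorem says. The paper fills this gap explicitly: since $G$ is finitely generated and periodic, the abelianisation $G/G'$ is a finitely generated periodic abelian group, hence finite; thus $G'$ has finite index in the finitely generated group $G$ and is therefore finitely generated (Schreier). Only then does Theorem~\ref{TL12FG}(b) apply.

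Your sketch of the mechanism behind Theorem~\ref{TL12FG} is also somewhat off-track. The paper does not argue by directly expanding commutators in a generating set $x_1,\dots,x_d$ of $G$ and reabsorbing conjugates; instead it passes through the non-abelian exterior square. The point is that $D/R\cong L_1/R\cong L_2/R\cong G\wedge G$, and $G\wedge G$ is finitely generated precisely when $G$ and $G'$ are (this is the cited result of Donadze--Ladra--Thomas). One then writes $L_1$ and $L_2$ as the product of $R$ with a finitely generated subgroup, and bilinearity (Lemma~\ref{lem:L12prod}) together with $[R,L_i]=1$ collapses $L_{1,2}$ to finitely many commutators of the chosen generators. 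The dependence on $G'$ being finitely generated is essential to this argument, which is why your shortcut ``$G$ finitely generated $\Rightarrow$ $L_{1,2}$ finitely generated'' is not justified as written.
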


It is well-known that there are finitely generated periodic  groups which are infinite. Some examples with this property it has been constructed in \cite{Aleshin,Gri,G,GS,S}. Moreover, we present explicit connections between the order of the commutators and the periodicity of the subgroup $L_{1,2}(G)$ (see Theorem \ref{thm:expL12} and Remark \ref{rem:exp}, below).

In this direction, we also deduce new descriptions of the lower central and derived series of the group $\chi(G)$.  

\begin{thmC}\label{thm:derived}
Let $D = D(G)$, $L=L(G)$, $L_1 = [L,G]$, $L_2 = [L,G^{\varphi}]$ and $L_{1,2} = [L_1,L_2]$. Then   
\begin{enumerate}[(a)]
    \item The derived subgroup $\chi(G)'$ is the central product of  $D$ and  $L_1L_2$.
    \item The second derived subgroup $\chi(G)^{(2)}$ is given by the product of subgroups $D'L_1'L_2'L_{1,2}$.
    \item If $k \geq 2$, then $\chi(G)^{(k+1)}$ is the central product of $D^{(k)}$, $ L_1^{(k)}$ and $L_2^{(k)}$.
\end{enumerate}
\end{thmC}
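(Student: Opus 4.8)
I would first assemble the structural facts about $\chi(G)$ that reduce everything to bookkeeping. Since $D$, $L$ are kernels of homomorphisms out of $\chi(G)$, they are normal, and so are $L_1=[L,G]$ and $L_2=[L,G^{\varphi}]$; moreover $L_1,L_2\le L$. The defining relation forces $[L,D]=1$, so $D$ centralises $L_1L_2$; and $W(G)=L\cap D$ is central in $LD$, while $L_{1,2}=[L_1,L_2]\le R(G)\le W(G)$, so $L_{1,2}$ is central in $LD$ (in particular in $L_1$, $L_2$, $D$ and in itself). From $[L_1,L_2]\le Z(L)$ the three subgroups lemma gives $[[L_1,L_1],L_2]=1$ and $[L_1,[L_2,L_2]]=1$, whence $[L_1^{(i)},L_2^{(j)}]=1$ whenever $i\ge1$ or $j\ge1$. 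Finally I will use repeatedly that for normal subgroups $A,B$ of a group one has $(AB)'=A'B'[A,B]$, and that if $A,B,C$ are pairwise commuting normal subgroups then $ABC$ is a homomorphic image of $A\times B\times C$, so $(ABC)^{(m)}=A^{(m)}B^{(m)}C^{(m)}$ for every $m\ge0$; one also uses the evident $\varphi$-involution of $\chi(G)$, which fixes $D$ and interchanges $L_1$ and $L_2$.

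For part (a): $DL_1L_2$ is a normal subgroup of $\chi(G)$, it lies in $\chi(G)'$ (as $D=[G,G^{\varphi}]$, $L_1=[L,G]$, $L_2=[L,G^{\varphi}]$ are generated by commutators), so it remains to see that $\chi(G)/DL_1L_2$ is abelian, i.e. that $[G,G]$, $[G^{\varphi},G^{\varphi}]$ and $[G,G^{\varphi}]=D$ all lie in $DL_1L_2$. Writing $\ell_h=h^{-1}h^{\varphi}\in L$, the identity $[g,h^{\varphi}]=[g,\ell_h]\,[g,h]^{\ell_h}$, together with $[g,\ell_h]\in[G,L]=L_1$ and $[[g,h],\ell_h]\in[G,L]=L_1$, shows $[g,h]\in DL_1$; applying the involution gives $[g^{\varphi},h^{\varphi}]\in DL_2$. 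Hence $\chi(G)'=DL_1L_2$, and since $[D,L_1L_2]=1$ this is a central product of $D$ and $L_1L_2$.

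For parts (b) and (c): by (a) and $[D,L_1L_2]=1$,
\[\chi(G)^{(2)}=(DL_1L_2)'=D'(L_1L_2)'[D,L_1L_2]=D'(L_1L_2)'=D'\,L_1'\,L_2'\,L_{1,2},\]
using $(L_1L_2)'=L_1'L_2'[L_1,L_2]$. Since $L_{1,2}\le W(G)$ is central in $LD$ and commutes with $D'$, it is central in $\chi(G)^{(2)}=D'L_1'L_2'L_{1,2}$, so $\chi(G)^{(3)}=(D'L_1'L_2')'$. Now $D'$, $L_1'$, $L_2'$ are pairwise commuting normal subgroups of $\chi(G)$ (using $[D,L]=1$ and $[L_1',L_2]=1$), hence $D'L_1'L_2'$ is a homomorphic image of $D'\times L_1'\times L_2'$, so for every $m\ge0$
\[(D'L_1'L_2')^{(m)}=D^{(m+1)}L_1^{(m+1)}L_2^{(m+1)}.\]
Taking $m=k-1$ gives $\chi(G)^{(k+1)}=D^{(k)}L_1^{(k)}L_2^{(k)}$ for $k\ge2$, and these three subgroups pairwise commute (they sit inside $D$, $L_1'$, $L_2'$), which is the asserted central product.

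I expect no genuinely hard step: the one place where an actual argument (rather than normality, the commutator identity, or the product formula) is used is the reduction $[[L_1,L_1],L_2]=1$ via the three subgroups lemma, and this rests entirely on the cited facts $L_{1,2}=[L_1,L_2]\le R(G)\le W(G)$ and "$W(G)$ central in $L(G)D(G)$". The main care is therefore in stating the normality and centrality inputs precisely and in confirming the commutator identity in the proof of (a); I anticipate no conceptual obstacle beyond that.
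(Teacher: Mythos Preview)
Your argument is correct and follows the same overall architecture as the paper's proof: write $\chi(G)'=DL_{1}L_{2}$ with $[D,L_{1}L_{2}]=1$, take derived subgroups using the product formula, and then eliminate the cross term $[L_{1}',L_{2}']$ to see the higher derived subgroups split as products. The differences are purely technical. For (a) you supply a self-contained commutator computation showing $[g,h]\in DL_{1}$, whereas the paper simply cites Sidki's Proposition~4.1.4. For the vanishing of $[L_{1}',L_{2}]$ you invoke the three subgroups lemma applied to the centrality of $L_{1,2}$ in $L$; the paper instead deduces it from the bilinearity lemma $[\alpha_{1}\alpha_{2},\beta_{1}\beta_{2}]=\prod_{i,j}[\alpha_{i},\beta_{j}]$ (both routes are one-line consequences of the same centrality fact). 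Finally, for (c) you observe once and for all that $D'L_{1}'L_{2}'$ is a homomorphic image of $D'\times L_{1}'\times L_{2}'$ and read off every higher derived subgroup at once, while the paper carries out an explicit induction on $k$. Your packaging is slightly more economical, but there is no substantive difference in the underlying ideas.
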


Recall that a group $H$ is a central product of its normal subgroups $K_1, K_2, \ldots, K_l$ if $H=K_1 K_2 \cdots K_l$, $[K_i,K_j]=1$ for $i\neq j$ and $K_i \cap \left(\prod_{j\neq i}K_j\right) \leq Z(H)$ for all $i$.

\begin{thmD}
Let $n \geq 3$. Let $D = D(G)$, $L=L(G)$, $L_1 = [L,G]$ and $L_2 = [L,G^{\varphi}]$. Then 
\begin{enumerate}[(a)]
    \item The subgroups $[D,{}_{n-2}G]$, $[L_1,{}_{n-2} G]$, $[L_2,{}_{n-2} G^{\varphi}]$ are normal in $\chi(G)$. 
    \item The lower central series $\gamma_n\left(\chi(G)\right)=[D,_{n-2}G][L_1,_{n-2}G][L_2,_{n-2}G^{\varphi}]$.     
\end{enumerate}
\end{thmD}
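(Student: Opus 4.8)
The proof proceeds by induction on $n$, using Theorem C as the base case for understanding $\chi(G)'$, and using the three-subgroup lemma together with the known commutator relations between $D$, $L_1$, $L_2$, $G$, and $G^\varphi$.

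First I would establish part (a). For normality, the key observation is that $\chi(G)$ is generated by $G$ and $G^\varphi$, so it suffices to show each of $[D,{}_{n-2}G]$, $[L_1,{}_{n-2}G]$, $[L_2,{}_{n-2}G^\varphi]$ is invariant under conjugation by both $G$ and $G^\varphi$. Since $D$, $L$ are normal in $\chi(G)$, so are $L_1 = [L,G]$ and $L_2 = [L,G^\varphi]$. The subgroup $[D,{}_{n-2}G]$ is obviously normalized by $G$; to see it is normalized by $G^\varphi$, I would use that $D = [G,G^\varphi]$ together with the defining relation $[g,g^\varphi]=1$, which gives considerable commutation between $G^\varphi$ and $D$ modulo lower terms — more precisely I would invoke the fact (implicit in the structure of $\chi(G)$, cf. the discussion of $D(G)/R(G) \cong G\wedge G$) that $[D,G^\varphi] \leq [D,G]\cdot R(G)$ or a similar containment, and push the $G^\varphi$-action onto the $G$-action up to terms already inside $[D,{}_{n-2}G]$. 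The arguments for $[L_1,{}_{n-2}G]$ and $[L_2,{}_{n-2}G^\varphi]$ are symmetric, using $L_1 = [L,G]$, $L_2 = [L,G^\varphi]$ and the fact that $L_1$ is normalized by $G^\varphi$ (since $L$ and $G^\varphi$ both normalize... and $[L,G^\varphi,G^\varphi]$ type terms land back inside).

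For part (b), I would argue by induction on $n$. The case $n=3$ should follow from Theorem C~(a): since $\chi(G)' = \gamma_2(\chi(G))$ is the central product of $D$ and $L_1L_2$, we get $\gamma_3(\chi(G)) = [\gamma_2(\chi(G)), \chi(G)] = [DL_1L_2, GG^\varphi]$, and expanding this product of commutators — using that $D, L_1, L_2$ are normal and pairwise commuting (central product) — yields $[D,G][D,G^\varphi][L_1,G][L_1,G^\varphi][L_2,G][L_2,G^\varphi]$. Then I would absorb the "cross" terms: $[D,G^\varphi]$ into $[D,G]$ (up to the central/already-contained pieces), $[L_1,G^\varphi]$ into $[L_1,G]$, and $[L_2,G]$ into $[L_2,G^\varphi]$, giving exactly $[D,G][L_1,G][L_2,G^\varphi] = \gamma_3(\chi(G))$. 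For the inductive step $n \to n+1$, assuming $\gamma_n(\chi(G)) = [D,{}_{n-2}G][L_1,{}_{n-2}G][L_2,{}_{n-2}G^\varphi]$ with each factor normal by part (a), I compute $\gamma_{n+1} = [\gamma_n, \chi(G)] = [\gamma_n, G][\gamma_n, G^\varphi]$ and expand over the three normal factors, again reducing all commutators with the "wrong" generating subgroup to commutators with the right one.

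The main obstacle will be controlling the cross terms — showing, for instance, that $[[D,{}_{n-2}G], G^\varphi] \leq [D,{}_{n-2}G][L_1,{}_{n-2}G][L_2,{}_{n-2}G^\varphi]$. This is where I expect to need the three-subgroup lemma repeatedly: to move a $G^\varphi$ past a string of $G$'s, I would peel off one $G$ at a time, each time producing a term of the form $[\ldots, G^\varphi, G]$ which by induction or by the $n=3$ analysis lies in the target, plus a term $[\ldots, [G,G^\varphi]] = [\ldots, D]$ which feeds back into the $[D,{}_{*}G]$ chain. Making this bookkeeping precise — ensuring the inductive hypothesis is strong enough that all these reduction steps close up — is the technical heart of the argument; I would likely state and prove an auxiliary lemma asserting $[D,G^\varphi] \leq [D,G]R(G)$, $[L_1,G^\varphi]\leq [L_1,G][L_2,G^\varphi]$, and $[L_2,G] \leq [L_1,G][L_2,G^\varphi]$ (or their analogues with the relevant lower-central decorations), since everything else is then a mechanical expansion.
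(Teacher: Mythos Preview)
Your overall architecture is right---induction on $n$, expansion of $[\gamma_n,\chi(G)]$ over the normal factors, then absorption of the cross terms---and you have correctly located the entire difficulty in those cross terms. But two concrete technical ingredients are missing, and without them the argument does not close.

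First, your plan to pass a $G^{\varphi}$ across a string of $G$'s via the three-subgroup lemma runs into a circularity: applying three-subgroup in the form ``if two of $[A,B,C]$, $[B,C,A]$, $[C,A,B]$ lie in $N$ then so does the third'' requires $N=[L_1,{}_{k}G]$ to already be normal in $\chi(G)$, which is precisely what part (a) is asserting. The paper sidesteps this by proving an \emph{element-wise} swap identity: for $\alpha\in L$ and $a,g,b\in G$ one has $[\alpha,a,g,b^{\varphi}]=[\alpha,a,b^{\varphi},g]$, obtained by direct expansion using $[L,D]=1$ and the centrality of $R$ in $LD$. Iterating this moves the single $G^{\varphi}$ in $[L_1,{}_{k}G,G^{\varphi}]$ to the inside, landing in $[L_1,G^{\varphi},{}_{k}G]=[R,{}_{k}G]$, with no appeal to normality. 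Second, your proposed auxiliary containments $[L_1,G^{\varphi}]\leq [L_1,G][L_2,G^{\varphi}]$ and $[L_2,G]\leq [L_1,G][L_2,G^{\varphi}]$ are true (both left-hand sides equal $R$), but you give no indication of how to prove them; what is actually needed is the sharper fact $R\leq [D,G]\cap[L_1,G]\cap[L_2,G^{\varphi}]$, and this requires a genuine Hall--Witt computation on a generator $[g,\varphi,h,a^{\varphi}]$ of $R$. Together these two lemmas are exactly the absorption mechanism you are gesturing at. Two smaller remarks: $[D,G^{\varphi}]$ is not merely contained in $[D,G]R$ but equal to $[D,G]$, since $[x,y^{\varphi}]^{z^{\varphi}}=[x,y^{\varphi}]^{z}$; and the paper works with the decomposition $\chi(G)=G\cdot L$ (rather than $\langle G,G^{\varphi}\rangle$), which is cleaner because $[D,L]=1$ kills several terms immediately.
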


We briefly describe the organization of the paper. In the next section we fix the notation and present certain commutator identities. In the third section we present a description to the derived subgroup of the group $\nu(G)$. This description will be useful in the proofs of the main results. Section 4 is devoted to the proof of Theorem A and Corollary B. The proofs of Theorems C and D are given in Section 5.  

\section{Preliminaries}

We follow the notations from \cite{Sidki} and review some basic properties of
the group $\chi(G)$. Let $G^{\varphi}$ be an isomorphic copy of $G$. By definition, 
\[
\chi(G) = \langle G \cup G^\varphi \mid  [g,g^\varphi]=1,\forall g \in G \rangle.
\]
Additionally set 
$L=L(G)$, $W=W(G)$, $R=R(G)$, $L_1=L_1(G)=[L,G]$, $L_2=L_2(G)=[L,G^\varphi]$, and $L_{1,2} = [L_1,L_2]$, we will omit the $G$ from these symbols whenever the
context allows it. 

As usual, for arbitrary elements $x,y \in G$ we write $x^y=y^{-1}xy$ for the conjugate of $x$ by $y$; the commutator of $x$ and $y$ is then $[x,y] = x^{-1}y^{-1}xy$ and our commutators are left normed: $[x,y,z] = [[x,y],z]$. To simplify notation, we write $[g,\varphi]$ instead of $g^{-1}g^{\varphi}$. 

The following basic properties are consequences of 
the defining relations of $\chi(G)$ and the commutator rules (see \cite[Proposition 4.1.13]{Sidki} and \cite[Lemma 2.1]{Roc82} for more details). 

\begin{lem} 
\label{basic.chi}
The following relations hold in $\chi(G)$, for all 
$x, y,y_i,z,z_i \in G$.
\begin{itemize}
\item[(i)] $[x,y^{\varphi}] = [x^{\varphi},y]$;
\item[(ii)] $[x,y^{\varphi}]^{z^{\varphi}} = [x,y^{\varphi}]^z$;
\item[(iii)] $[x,y^{\varphi}]^{\omega(z_1^{\varepsilon_1},\ldots,z_n^{\varepsilon_n})} = [x,y^{\varphi}]^{\omega(z_1,\ldots,z_n)}$ for $\varepsilon_i \in \{1,\varphi\}$ and any word $\omega(z_1,\ldots,z_n) \in G$; 
\item[(iv)] $[x^{\varphi},y,x]=[x,y,x^{\varphi}]$;
\item[(v)] $[x^{\varphi},y_1,\ldots,y_n,x] = [x,y_1,\ldots,y_n,x^{\varphi}]$.
\end{itemize}
\end{lem}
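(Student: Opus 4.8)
The plan is to deduce all five identities from the single family of defining relations $[g,g^{\varphi}]=1$ ($g\in G$) together with the elementary commutator rules $[ab,c]=[a,c]^{b}[b,c]$ and $[a,bc]=[a,c][a,b]^{c}$; these are the only relations at our disposal. The starting point is a polarization of the defining relation. Substituting $g=xy$ and expanding $[xy,x^{\varphi}y^{\varphi}]$, the diagonal commutators $[x,x^{\varphi}]$ and $[y,y^{\varphi}]$ disappear and one is left with the master identity
\[
[x,y^{\varphi}]^{y}\,[y,x^{\varphi}]^{y^{\varphi}}=1 ,
\]
equivalently $[x,y^{\varphi}]^{y}=[x^{\varphi},y]^{y^{\varphi}}$. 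This relation already contains (i) in embryonic form, but it can only be unwound once we know that conjugation by $y$ and by $y^{\varphi}$ have the same effect on the commutators involved.

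That fact is precisely item (ii), and establishing it is the main obstacle. Indeed, since $z^{-1}z^{\varphi}=z^{\varphi}z^{-1}$ lies in $L=L(G)$ and $[x,y^{\varphi}]$ generates $D=D(G)$, identity (ii) is equivalent to the assertion $[L,D]=1$. Mapping $\chi(G)$ onto $T(G)\le G\times G\times G$ shows cheaply that $[L,D]\le W=L\cap D$, but the full equality must be produced inside $\chi(G)$. I would obtain it by a second polarization: substitute $g=xyz$ into $[g,g^{\varphi}]=1$, expand completely, and compare the outcome with the product of the three pairwise master identities attached to $(x,y)$, $(x,z)$ and $(y,z)$. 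The difference between the two expressions is a product of commutators of the shape $\big[[x,y^{\varphi}],\,z^{-1}z^{\varphi}\big]$, which is therefore forced to be trivial. This bookkeeping, carried out in \cite[Proposition 4.1.13]{Sidki} and \cite[Lemma 2.1]{Roc82}, is the genuinely technical step; the rest is formal.

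Granting (ii), the remaining identities follow in cascade. For (i), read the master identity as $[x,y^{\varphi}]^{y}=[x^{\varphi},y]^{y^{\varphi}}$ and use (ii) to replace the exponent $y^{\varphi}$ by $y$; cancelling the common conjugation by $y$ yields $[x,y^{\varphi}]=[x^{\varphi},y]$. Identity (iii) is then an induction on the length of the word $\omega$: because $D$ is normalised by both $G$ and $G^{\varphi}$, every initial conjugate of $[x,y^{\varphi}]$ again lies in $D$, so (ii) can be applied one letter at a time to turn each $z_i^{\varepsilon_i}$ into $z_i$.

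Finally (iv) and (v) come from the Hall--Witt identity applied to the triple $x,\,x^{\varphi},\,w$, where $w=y$ for (iv) and $w$ is an iterated commutator in the $y_i$ for (v). Because $[x,x^{\varphi}]=1$, one of the three Hall--Witt factors vanishes, and (i)--(iii) serve to normalise the surviving two factors into the required form; the general length-$n$ statement (v) then follows from the case $w=y$ by induction on $n$, using (iii) to shuttle the intermediate entries between $G$ and $G^{\varphi}$. Throughout, the only non-routine point is the verification of (ii), i.e.\ of $[L,D]=1$, which is where the argument genuinely uses more than a single application of the defining relation.
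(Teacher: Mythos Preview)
The paper does not supply its own proof of this lemma: the sentence preceding the statement simply records that the identities are ``consequences of the defining relations of $\chi(G)$ and the commutator rules'' and refers the reader to \cite[Proposition~4.1.13]{Sidki} and \cite[Lemma~2.1]{Roc82}. There is therefore no in-paper argument to compare against; your outline is essentially a reconstruction of what those references do.

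Your sketch is sound in its architecture. The polarisation $g=xy$ does give the master identity $[x,y^{\varphi}]^{y}=[x^{\varphi},y]^{y^{\varphi}}$, and you are right that the heart of the matter is (ii), equivalently $[L,D]=1$; once that is in hand, (i) drops out of the master identity and (iii) is a routine induction. Two small comments on the remainder. First, (iv) is most cleanly obtained not from Hall--Witt but from the equality $xx^{\varphi}=x^{\varphi}x$: expanding $[xx^{\varphi},y]=[x^{\varphi}x,y]$ via $[ab,c]=[a,c]^{b}[b,c]$ and then using (i)--(ii) (together with $[L,D]=1$) to commute the pieces yields $[x,y,x^{\varphi}]=[x^{\varphi},y,x]$ directly; the Hall--Witt route works too but leaves more debris to clean up. Second, the induction you propose for (v) needs a touch more than (iii), since the inner commutator $[x,y_1,\ldots,y_k]$ lies in $G$ rather than in $D$ and (iii) only governs conjugation of elements of $D$; in the cited sources this is handled by an argument specific to (v) rather than by a bare appeal to (iii). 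Neither point is a genuine gap---your identification of $[L,D]=1$ as the one substantive step is exactly the point of the references the paper cites.
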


\begin{lem}\cite[Lemma 4.1.6]{Sidki}\label{l1} 
    Let $a,b,g,h,m\in G$, then 
    \begin{itemize}
        \item[(i)] $[h,g^{-\varphi}g]=[g^{\varphi},h][h,g]=[h,g][g^{\varphi},h] $;
        \item[(ii)] $ [h^{-1}h^{\varphi},g^{\varphi}]=[g^{\varphi},h][h^{\varphi},g^{\varphi}]=[h^{\varphi},g^{\varphi}][g^{\varphi},h]$.
    \end{itemize}
\end{lem}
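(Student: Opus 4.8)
The plan is to obtain both identities purely formally: expand the left-hand sides with the ordinary commutator rules $[a,bc]=[a,c]\,[a,b]^{c}$, $[ab,c]=[a,c]^{b}\,[b,c]$ and $[a,b^{-1}]=\bigl([a,b]^{-1}\bigr)^{b^{-1}}$ (together with the variant $[a^{-1},b]=\bigl([a,b]^{-1}\bigr)^{a^{-1}}$), and then use Lemma~\ref{basic.chi} to normalise the resulting conjugating words. The one piece of genuine content, which I would isolate first as a claim, is that an element of the form $[g,h^{\varphi}]$ is \emph{centralised by $[h,g]$} (equivalently, by $[g^{\varphi},h]$, which equals $[g,h^{\varphi}]$ by Lemma~\ref{basic.chi}(i)). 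To see this, read $[h,g]$ in the exponent as the word $\omega(h,g)$ with $\omega(z_{1},z_{2})=[z_{1},z_{2}]$; by Lemma~\ref{basic.chi}(iii) applied to $[g,h^{\varphi}]$ one may replace that exponent by $\omega(h,g^{\varphi})=[h,g^{\varphi}]$, and since $[h,g^{\varphi}]=[g,h^{\varphi}]^{-1}$ (again Lemma~\ref{basic.chi}(i)), conjugation by it fixes $[g,h^{\varphi}]$. The same transport argument, now replacing $\omega(h,g)$ by $\omega(h^{\varphi},g^{\varphi})=[h^{\varphi},g^{\varphi}]$, shows that $[g^{\varphi},h]$ also commutes with $[h^{\varphi},g^{\varphi}]$.

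For part~(i) I would write $g^{-\varphi}g=(g^{\varphi})^{-1}g$ and expand $[h,g^{-\varphi}g]=[h,g]\cdot[h,(g^{\varphi})^{-1}]^{g}$. Then $[h,(g^{\varphi})^{-1}]^{g}=\bigl([h,g^{\varphi}]^{-1}\bigr)^{(g^{\varphi})^{-1}g}$, and because $[h,g^{\varphi}]$ is a commutator of the type covered by Lemma~\ref{basic.chi}(iii) and the exponent word $(g^{\varphi})^{-1}g$ there becomes $g^{-1}g=1$, this collapses to $[h,g^{\varphi}]^{-1}=[g^{\varphi},h]$. Hence $[h,g^{-\varphi}g]=[h,g]\,[g^{\varphi},h]$, and the other order is immediate from the centralising claim above.

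For part~(ii) the computation is parallel: expand $[h^{-1}h^{\varphi},g^{\varphi}]=[h^{-1},g^{\varphi}]^{h^{\varphi}}\,[h^{\varphi},g^{\varphi}]$ and rewrite $[h^{-1},g^{\varphi}]^{h^{\varphi}}=\bigl([h,g^{\varphi}]^{-1}\bigr)^{h^{-1}h^{\varphi}}$; the exponent word $h^{-1}h^{\varphi}$ becomes $h^{-1}h=1$ under Lemma~\ref{basic.chi}(iii), leaving $[g^{\varphi},h]\,[h^{\varphi},g^{\varphi}]$, and the reversed product follows from the second half of the centralising claim. I expect the only real obstacle to be bookkeeping discipline: at each step one must verify that the element being conjugated is indeed of the form $[x,y^{\varphi}]^{\pm1}$ before erasing $\varphi$'s in its exponent, since this is exactly the situation to which Lemma~\ref{basic.chi}(ii)--(iii) applies; once every factor that appears has been recognised in this form, the conjugating words telescope to the identity and the stated identities fall out. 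Proving the centralising claim cleanly, rather than by a brute-force word manipulation, is the step that deserves the most care.
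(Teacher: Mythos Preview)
The paper does not actually prove this lemma; it is quoted verbatim from \cite[Lemma~4.1.6]{Sidki} and no argument is given. Your proposal is a complete and correct proof: the commutator expansions are applied correctly, the use of Lemma~\ref{basic.chi}(iii) to trivialise the conjugating words $(g^{\varphi})^{-1}g$ and $h^{-1}h^{\varphi}$ is legitimate (and extends to inverses of $[x,y^{\varphi}]$ since $(u^{w})^{-1}=(u^{-1})^{w}$), and your centralising claim---that $[g,h^{\varphi}]$ commutes with $[h,g]$ and with $[h^{\varphi},g^{\varphi}]$---is exactly the content needed to swap the two factors, proved cleanly via Lemma~\ref{basic.chi}(i),(iii). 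This is essentially the standard way one establishes these identities.
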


The subgroup $L_{1,2}$ plays a central role in this work. To investigate some of its properties, we use \cite[Lemma 4.1.10]{Sidki},  which states that $[L_1,L_2]\leq L_1\cap L_2=W$. Therefore, $[L_1,L_2]$ centralizes both $L_1$ and $L_2$.

The next lemma shows that commutators are bilinear in their inputs and will be essential for the proof of Theorem C.

\begin{lem}\label{lem:L12prod}
    Let $\alpha_1,\alpha_2\in L_1$ and $\beta_1,\beta_2\in L_2$. Then we have 
    \[
    [\alpha_1\alpha_2,\beta_1\beta_2]= [\alpha_1,\beta_1][\alpha_1,\beta_2][\alpha_2,\beta_1][\alpha_2,\beta_2].
    \]
\end{lem}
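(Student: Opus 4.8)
The plan is to prove the bilinearity identity
\[
[\alpha_1\alpha_2,\beta_1\beta_2]= [\alpha_1,\beta_1][\alpha_1,\beta_2][\alpha_2,\beta_1][\alpha_2,\beta_2]
\]
by exploiting the fact, recalled just above the statement from \cite[Lemma 4.1.10]{Sidki}, that $L_{1,2}=[L_1,L_2]\leq L_1\cap L_2=W$ is central in $L_1L_2$. The strategy is first to establish the two ``one-sided'' identities $[\alpha_1\alpha_2,\beta]=[\alpha_1,\beta][\alpha_2,\beta]$ for $\beta\in L_2$ and $[\alpha,\beta_1\beta_2]=[\alpha,\beta_1][\alpha,\beta_2]$ for $\alpha\in L_1$, and then to combine them.

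The key steps, in order, are as follows. First I would write down the standard commutator expansion rules $[xy,z]=[x,z]^y[y,z]$ and $[x,yz]=[x,z][x,y]^z$. Applying the first with $x=\alpha_1$, $y=\alpha_2$, $z=\beta_1$ gives $[\alpha_1\alpha_2,\beta_1]=[\alpha_1,\beta_1]^{\alpha_2}[\alpha_2,\beta_1]$; since $[\alpha_1,\beta_1]\in[L_1,L_2]$ is central in $L_1L_2$ and $\alpha_2\in L_1$, the conjugation by $\alpha_2$ is trivial, yielding $[\alpha_1\alpha_2,\beta_1]=[\alpha_1,\beta_1][\alpha_2,\beta_1]$. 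Symmetrically, using $[x,yz]=[x,z][x,y]^z$ with the centrality of $[L_1,L_2]$ in $L_1L_2$ (so conjugation by $\beta_1,\beta_2\in L_2$ is trivial), I get $[\alpha,\beta_1\beta_2]=[\alpha,\beta_1][\alpha,\beta_2]$. Now I expand the left side of the target identity in two stages: $[\alpha_1\alpha_2,\beta_1\beta_2]=[\alpha_1\alpha_2,\beta_1][\alpha_1\alpha_2,\beta_2]=[\alpha_1,\beta_1][\alpha_2,\beta_1][\alpha_1,\beta_2][\alpha_2,\beta_2]$. The final step is to reorder the two middle factors: since every factor lies in the abelian (indeed central in $L_1L_2$) subgroup $[L_1,L_2]$, we may freely transpose $[\alpha_2,\beta_1]$ and $[\alpha_1,\beta_2]$, giving $[\alpha_1,\beta_1][\alpha_1,\beta_2][\alpha_2,\beta_1][\alpha_2,\beta_2]$, as required.

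I expect no serious obstacle here; the whole proof is a short manipulation once the centrality of $[L_1,L_2]$ in $L_1L_2$ is invoked. The one point deserving care is making sure that at each use of a commutator expansion the ``error'' conjugation factor genuinely lies in $[L_1,L_2]$ and that the conjugating element lies in $L_1L_2$, so that the cited centrality applies; this is immediate because $\alpha_i\in L_1\leq L_1L_2$, $\beta_j\in L_2\leq L_1L_2$, and each $[\alpha_i,\beta_j]\in[L_1,L_2]$. A second minor point is the last reordering, which again is justified only because $[L_1,L_2]$ is abelian — true since it is central in $L_1L_2\supseteq[L_1,L_2]$. Thus the proof reduces to two lines of commutator calculus plus one appeal to \cite[Lemma 4.1.10]{Sidki}.
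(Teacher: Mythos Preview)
Your proof is correct and follows essentially the same approach as the paper: both use the centrality of $L_{1,2}=[L_1,L_2]$ in $L_1L_2$ (from \cite[Lemma 4.1.10]{Sidki}) to drop the conjugations arising from the standard expansions $[xy,z]=[x,z]^y[y,z]$ and $[x,yz]=[x,z][x,y]^z$, and then use the resulting abelianness of $L_{1,2}$ to reorder the factors. The only cosmetic difference is the order in which the two variables are expanded.
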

\begin{proof}
    Once $L_{1,2}$ centralizes $L_1$ and $L_2$, we have that,
    \begin{align*}
        [\alpha_1\alpha_2,\beta_1\beta_2]=&[\alpha_1,\beta_1\beta_2]^{\alpha_2}[\alpha_2,\beta_1\beta_2]\\
        =&[\alpha_1,\beta_1\beta_2][\alpha_2,\beta_1\beta_2]\\
        =&[\alpha_1,\beta_2][\alpha_1,\beta_1]^{\beta_2}[\alpha_2,\beta_2][\alpha_2,\beta_1]^{\beta_2}\\
        =&[\alpha_1,\beta_2][\alpha_1,\beta_1][\alpha_2,\beta_2][\alpha_2,\beta_1]\\
        =&[\alpha_1,\beta_1][\alpha_1,\beta_2][\alpha_2,\beta_1][\alpha_2,\beta_2]. \qedhere
    \end{align*}
\end{proof}

The following results will be used on Theorem \ref{thm:expL12}. 

\begin{lem}\label{Lm:R} Let $g,h,a,b,c\in G$. Then, the following properties hold.
\begin{itemize}
    \item[(i)]$
 [\, [g,\varphi] [h,\varphi],a,b^{\varphi}]=[[g,\varphi] ,a,b^{\varphi}][ [h,\varphi],a,b^{\varphi}] \ \mod(L_{1,2})
$;
\item[(ii)] $ [\,[g,\varphi,a] [ h,\varphi,b] ,c^{\varphi}]=[g,\varphi,a ,c^{\varphi}][ h,\varphi,b ,c^{\varphi}]$
\end{itemize}
    
\end{lem}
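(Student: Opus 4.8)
The plan is to derive both identities from the standard commutator expansion rules together with the structural facts already established: that $L_{1,2} = [L_1,L_2] \leq W = L_1 \cap L_2$ is central in $L_1 L_2$, and that elements of the form $[g,\varphi,a] = [[g,\varphi],a]$ lie in $L_1$ while $[h,\varphi,b^\varphi] = [[h,\varphi],b^\varphi]$ lie in $L_2$. For part (i), I would start from the identity $[uv,a] = [u,a]^v [v,a]$ applied to $u = [g,\varphi]$, $v = [h,\varphi]$, obtaining $[[g,\varphi][h,\varphi],a] = [g,\varphi,a]^{[h,\varphi]}[h,\varphi,a]$. Now $[g,\varphi,a]$ and $[h,\varphi,a]$ both lie in $L_1$, and conjugation by $[h,\varphi]\in L$ changes $[g,\varphi,a]$ only by an element of $[L_1,L] = [L_1, L_1 L_2]$; working modulo $L_{1,2}$ kills the $L_2$-part of that correction, and one checks the $[L_1,L_1]$-part is also absorbed appropriately — more precisely, after a further commutator with $b^\varphi$ the conjugation discrepancy lands in $[L_{1,2}, G^\varphi] \leq L_{1,2}$ (or is trivial), which is exactly the error term allowed. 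Then applying $[\,\cdot\,, b^\varphi]$ to the now-commuting product and using again that a commutator of a product distributes modulo commutators of the factors (which sit in $L_{1,2}$) yields the claimed congruence.

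For part (ii), the key observation is that both $\xi := [g,\varphi,a]$ and $\eta := [h,\varphi,b]$ lie in $L_1$, hence $[\xi\eta, c^\varphi] = [\xi, c^\varphi]^\eta [\eta, c^\varphi]$ by the same product rule. Here $[\xi, c^\varphi]$ and $[\eta, c^\varphi]$ both lie in $[L_1, G^\varphi] = L_{1,2}$, which is central in $L_1 L_2$; since $\eta \in L_1 \subseteq L_1 L_2$, the conjugation $[\xi, c^\varphi]^\eta = [\xi, c^\varphi]$ is trivial, and the two factors commute with each other. Therefore $[\xi\eta, c^\varphi] = [\xi, c^\varphi][\eta, c^\varphi] = [g,\varphi,a,c^\varphi][h,\varphi,b,c^\varphi]$ on the nose, with no error term — which is why part (ii) is stated as an exact equality rather than a congruence.

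The main obstacle is bookkeeping in part (i): one must verify carefully that the conjugation correction term $[g,\varphi,a]^{[h,\varphi]}[g,\varphi,a]^{-1} = [[g,\varphi,a],[h,\varphi]]$ becomes harmless after the outer commutator with $b^\varphi$ is taken. This requires knowing that $[[g,\varphi,a],[h,\varphi]] \in [L_1, L]$ and then that $[[L_1,L], G^\varphi]$ lies in $L_{1,2}$; the cleanest route is to note $[L_1, L] = [L_1, L_1]\,[L_1, L_2]$ with $[L_1,L_2] = L_{1,2}$, and to handle the $[L_1,L_1]$ piece by observing that commuting it further with $G^\varphi = $ (generators of $L_2$ via $L$) produces elements of $[L_1, L_1, L_2] \subseteq [W, L_2] = 1$ up to terms already in $L_{1,2}$. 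I would invoke Lemma \ref{basic.chi} for the needed rewriting of mixed $\varphi$-commutators and Lemma \ref{l1} where a $[g^\varphi, h]$-term needs to be converted. Once this single correction-term estimate is in place, both parts follow by routine application of $[uv,a]=[u,a]^v[v,a]$ and centrality of $L_{1,2}$.
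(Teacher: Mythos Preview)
Your plan for part~(ii) has the right shape but a wrong identification: you write that $[\xi,c^{\varphi}]$ and $[\eta,c^{\varphi}]$ lie in $[L_1,G^{\varphi}]=L_{1,2}$, but $[L_1,G^{\varphi}]$ is $R$, not $L_{1,2}$ (indeed $R=[G,L,G^{\varphi}]=[L_1,G^{\varphi}]$ by definition, whereas $L_{1,2}=[L_1,L_2]$ is a proper subgroup of $R$ in general). The argument is easily repaired: since $[g,\varphi,a,c^{\varphi}]\in R\leq W$ and $W$ is central in $LD$, conjugation by $\eta\in L_1\leq L$ is trivial. This is exactly what the paper does in one line.

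Part~(i) has a genuine gap. You propose to handle the correction term $[[g,\varphi,a],[h,\varphi]]\in[L_1,L]$ by decomposing $[L_1,L]=[L_1,L_1]\,[L_1,L_2]$ and then arguing that the $[L_1,L_1]$-piece becomes harmless after commuting with $b^{\varphi}$ because ``$[L_1,L_1,L_2]\subseteq[W,L_2]=1$''. Two things fail here. First, $L$ is not equal to $L_1L_2$ in general (only $L'\leq L_1L_2$), so the decomposition $[L_1,L]=[L_1,L_1][L_1,L_2]$ is unjustified. Second, $[L_1,L_1]\leq W$ is false: $W=L_1\cap L_2$, and there is no reason for $L_1'$ to lie in $L_2$. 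So the chain $[L_1,L_1,L_2]\leq[W,L_2]$ does not hold.

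The paper avoids this altogether by moving the conjugation to the other slot: instead of analysing $[g,\varphi,a]^{[h,\varphi]}$, it writes
\[
\bigl[[g,\varphi,a]^{[h,\varphi]},b^{\varphi}\bigr]
=\bigl[[g,\varphi,a],\,(b^{\varphi})^{[h,\varphi]^{-1}}\bigr]^{[h,\varphi]}
=\bigl[[g,\varphi,a],\,b^{\varphi}\,[b^{\varphi},[h,\varphi]^{-1}]\bigr]^{[h,\varphi]}.
\]
Here $[b^{\varphi},[h,\varphi]^{-1}]\in L_2$, so expanding the commutator of $[g,\varphi,a]\in L_1$ with this product produces one factor in $[L_1,L_2]=L_{1,2}$ and one factor $[g,\varphi,a,b^{\varphi}]\in R$; the remaining conjugations are by elements of $L$ or $L_2$ and are trivial on $R$. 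This route never needs to understand $[L_1,L]$ or $[L_1,L_1]$ at all. If you want to rescue your outline, the cleanest move is to adopt this transfer of the conjugate from $[g,\varphi,a]$ to $b^{\varphi}$ rather than trying to control $[L_1,L]$ directly.
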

\begin{proof}
{Let $g,h,a,b,c\in G$. For $(i)$ consider $x=[[g,\varphi] [h,\varphi],a,b^{\varphi}]$. Then, module $L_{1,2}$, we have that
\begin{align*}
    x&=[[[g,\varphi],a]^{[h,\varphi]}[[h,\varphi],a],b^{\varphi}]\\
    &=[[[g,\varphi],a]^{[h,\varphi]},b^{\varphi}]^{[[h,\varphi],a]}[[h,\varphi],a,b^{\varphi}]\\
    &=[[[g,\varphi],a]^{[h,\varphi]},b^{\varphi}][[h,\varphi],a,b^{\varphi}] \ \ \ \ \big( [[[g,\varphi],a]^{[h,\varphi]},b^{\varphi}]\in R \big)\\
    &= [[[g,\varphi],a],(b^{\varphi})^{[h,\varphi]^{-1}}]^{[h,\varphi]}[[h,\varphi],a,b^{\varphi}]\\
    &=[[[g,\varphi],a],b^{\varphi}[b^{\varphi},{[h,\varphi]^{-1}}]]^{[h,\varphi]}[[h,\varphi],a,b^{\varphi}]\\
    &=\left([[[g,\varphi],a],[b^{\varphi},{[h,\varphi]^{-1}}]][[g,\varphi],a,b^{\varphi}]^{[b^{\varphi},{[h,\varphi]^{-1}}]}\right)^{[h,\varphi]}[[h,\varphi],a,b^{\varphi}]\\
    &=[[g,\varphi],a,b^{\varphi}]^{[h,\varphi]}[[h,\varphi],a,b^{\varphi}] \ \ \ \ \big([[[g,\varphi],a],[b^{\varphi},{[h,\varphi]^{-1}}]]\in L_{1,2}\big)\\
    &=[[g,\varphi],a,b^{\varphi}][[h,\varphi],a,b^{\varphi}] \ \ \ \ \ \ \ \ \ 
    \ \ \big([[g,\varphi],a,b^{\varphi}]\in R\big)
\end{align*}
Now, to prove $(ii)$, observe that 
\begin{align*}
    [[[g,\varphi],a] [ [h,\varphi],b] ,c^{\varphi}]&=[[g,\varphi],a ,c^{\varphi}]^{[ [h,\varphi],b]}[ [h,\varphi],b, c^{\varphi}]\\
    &=[[g,\varphi],a ,c^{\varphi}][ [h,\varphi],b ,c^{\varphi}],
\end{align*}
since $ [[g,\varphi],a ,c^{\varphi}]\in R$.}
\end{proof}
The next is an immediate consequence of Lemma \ref{Lm:R}.  

\begin{lem}\label{Lm:R/L12} Let $G$ be a group. Then,
    \[
    \frac{R}{L_{1,2}}= \big\langle { [g,\varphi, a,b^{\varphi}]} L_{1,2} \mid g,a,b\in G\big\rangle.
    \]
\end{lem}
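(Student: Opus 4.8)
The plan is to show that $R/L_{1,2}$ is generated modulo $L_{1,2}$ by the elements $[g,\varphi,a,b^{\varphi}]L_{1,2}$, using the defining generators of $R = [G,L,G^{\varphi}]$ together with the bilinearity relations from Lemma \ref{Lm:R}. First I would recall that $R = [G,L(G),G^{\varphi}] = \langle [x,\ell,y^{\varphi}] \mid x,y \in G,\ \ell \in L \rangle^{\chi(G)}$, and that $L = L(G)$ is generated by the elements $[g,\varphi] = g^{-1}g^{\varphi}$. Since $R \leq W$ is central in $LD$ (in particular normal in $\chi(G)$ and centralized by $G$, $G^{\varphi}$, $L$ and $D$), conjugation by elements of $\chi(G)$ acts trivially on $R$, so the normal closure is just the subgroup generated by $\{[x,\ell,y^{\varphi}]\}$; moreover the first entry $x \in G$ of $[x,\ell,y^{\varphi}]$ can be replaced by $[x,\varphi]\in L$ after commuting (here one uses that $[x,\ell]$ already lies in $L_1 = [L,G]$ and the extra contribution is swallowed). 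Thus it suffices to express every $[\,[g,\varphi],a,b^{\varphi}]$ with $g,a,b\in G$ in the required form and then handle products of such elements.

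Next I would reduce a general generator of $R$ to a single commutator of the shape $[g,\varphi,a,b^{\varphi}]$ modulo $L_{1,2}$. The point is that a typical generator of $R$ has the form $[\ell,a,b^{\varphi}]$ with $\ell \in L$, and $\ell$ is a product $\ell = [g_1,\varphi]^{\pm 1}\cdots[g_k,\varphi]^{\pm 1}$. Using Lemma \ref{Lm:R}(i), which gives
\[
[[g,\varphi][h,\varphi],a,b^{\varphi}] \equiv [[g,\varphi],a,b^{\varphi}][[h,\varphi],a,b^{\varphi}] \pmod{L_{1,2}},
\]
I can split such a commutator over products in the first entry, reducing inductively on $k$ to the case $\ell = [g,\varphi]^{\pm 1}$; the inverse case $[g,\varphi]^{-1}$ is handled by noting $[[g,\varphi]^{-1},a,b^{\varphi}] \equiv [[g,\varphi],a,b^{\varphi}]^{-1} \pmod{L_{1,2}}$, again via Lemma \ref{Lm:R}(i) applied to $[g,\varphi][g,\varphi]^{-1}=1$. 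This shows $R/L_{1,2}$ is generated by the classes of $[[g,\varphi],a,b^{\varphi}]$ with $g,a,b\in G$, which is exactly the asserted generating set once we also check closure of this set of classes under multiplication and inversion — inversion is immediate as just noted, and products are precisely what the statement allows (a subgroup generated by a set automatically contains all products).

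The remaining point, and the one that needs the most care, is the justification that in the original generators $[x,\ell,y^{\varphi}]$ of $R$ one may take the first slot to be of the form $[g,\varphi]$ rather than a general $x\in G$ — equivalently, that $R$ is already generated by $[\,\ell,a,b^{\varphi}]$ with $\ell\in L_1$ wait, more precisely by $[[g,\varphi],a,b^{\varphi}]$. This follows because $R = [G, L, G^{\varphi}]$ and $[G,L] \leq L_1$; expanding $[x,\ell] = [x,\ell]$ and observing that every element of $L_1 = [L,G]$ is a product of conjugates of commutators $[[g,\varphi],a]$, one pushes these products through the outer commutator with $y^{\varphi}$ using Lemma \ref{Lm:R}(ii):
\[
[[[g,\varphi],a][[h,\varphi],b],c^{\varphi}] = [[g,\varphi],a,c^{\varphi}][[h,\varphi],b,c^{\varphi}].
\]
So the main obstacle is bookkeeping: carefully tracking which commutator identities hold on the nose (those from Lemma \ref{Lm:R}(ii), which are exact) versus only modulo $L_{1,2}$ (those from Lemma \ref{Lm:R}(i)), and making sure that the inductive splitting of products in both the inner and outer entries terminates at single commutators $[g,\varphi,a,b^{\varphi}]$. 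Once the generators of $R$ are rewritten this way, the equality of the two subgroups is immediate, and the lemma follows.
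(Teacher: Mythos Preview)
Your overall strategy is the same as the paper's: use Lemma~\ref{Lm:R}(ii) to split an element of $R=[L_1,G^{\varphi}]$ over products in the $L_1$-slot exactly, and then use Lemma~\ref{Lm:R}(i) to split $[\prod_i[g_i,\varphi],a,b^{\varphi}]$ modulo $L_{1,2}$ into a product of $[g_i,\varphi,a,b^{\varphi}]$'s. That part is fine and matches the paper's proof almost verbatim.

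There is, however, a genuine error in your set-up. You assert that because $R\leq W$ is central in $LD$, the subgroup $R$ is ``centralized by $G$, $G^{\varphi}$, $L$ and $D$'' and hence that conjugation by $\chi(G)$ acts trivially on $R$. This is false: $G$ and $G^{\varphi}$ are not contained in $LD$, and in fact $[R,G]$ is typically nontrivial (the paper later studies $[R,{}_nG]$ and relates it to $\gamma_{n+2}(\chi(G))$; see Lemma~\ref{RnaIntersessao} and Corollary~\ref{Cor:GammaCh}). Fortunately you do not actually need this claim. The conclusion you draw from it---that $R$ equals the subgroup (not the normal closure) generated by the commutators $[\alpha,b^{\varphi}]$ with $\alpha\in L_1$---holds simply by the definition of the commutator subgroup $[L_1,G^{\varphi}]$. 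So the main line survives, but the justification you wrote for that step is wrong and should be replaced by the trivial observation that $[L_1,G^{\varphi}]$ is by definition generated by such commutators. The remaining exposition (``the first entry $x\in G$ can be replaced by $[x,\varphi]$ after commuting'', ``expanding $[x,\ell]=[x,\ell]$'', the self-correcting ``wait'') is muddled and should be tightened, but these are presentation issues rather than mathematical gaps.
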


\begin{proof}
Let $g,a,b,g_1, \ldots ,g_n\in G$ and $\alpha_1, \ldots ,\alpha_r \in L_1$. Then, by Lemma \ref{Lm:R} item (ii)
\[
\left[ \prod_{i=1}^{r}\alpha_i,b^{\varphi}\right]= \prod_{i=1}^r[\alpha_i,b^{\varphi}].
\]
And, module $L_{1,2}$ we have that by Lemma \ref{Lm:R} item (i)
\[
\left[\prod_{i=1}^n[g_i,\varphi],a,b^{\varphi}\right]=\prod_{i=1}^n\left[g_i,\varphi,a,b^{\varphi}\right].
\]
Therefore, every element of $R/L_{1,2}$ splits in a product of elements of the form $[g,\varphi, a,b^{\varphi}] L_{1,2}$.
\end{proof}

\section{The group \texorpdfstring{$\nu(G)$}{ν(G)}}

The non-abelian tensor square $G \otimes G$ of a group $G$, as introduced by Brown and Loday \cite{BL}, is defined to be the group generated by all symbols $\; \, g\otimes h, \; g,h\in G$, subject to the relations
\[
gg_1 \otimes h = ( g^{g_1}\otimes h^{g_1}) (g_1\otimes h) \quad
\mbox{and} \quad g\otimes hh_1 = (g\otimes h_1)( g^{h_1} \otimes
h^{h_1})
\]
for all $g,g_1, h,h_1 \in G$, where  we write $x^y$ for the conjugate $y^{-1} x y$ of $x$ by $y$, for any elements $x, y \in G$. In \cite{BL}, Brown and Loday show that the third homotopy group of the suspension of an Eilenberg-MacLane space $K(G,1)$ satisfies $\pi_3(SK(G,1)) \cong \mu(G),$ where $\mu(G)$ denotes the kernel of the derived map $\rho': G \otimes G \to G'$, given by  $g \otimes h \mapsto [g,h]$.

In \cite{NR1}, Rocco considered the following construction. Let $G$ be a group and let $\varphi : G \rightarrow G^{\varphi}$ be an isomorphism ($G^{\varphi}$ is a copy of $G$, where $g \mapsto g^{\varphi}$, for all $g \in G$). Define the group $\nu(G)$ to be \[ \nu (G):= \langle 
G \cup G^{\varphi} \ \vert \ [g_1,{g_2}^{\varphi}]^{g_3}=[{g_1}^{g_3},({g_2}^{g_3})^{\varphi}]=[g_1,{g_2}^{\varphi}]^{{g_3}^{\varphi}},
\; \ g_i \in G \rangle .\]

The motivation for studying $\nu(G)$ is the commutator connection: indeed, the map  $\Phi: G \otimes G \rightarrow [G, G^{\varphi}]$, defined by $g \otimes h \mapsto [g , h^{\varphi}]$ is an isomorphism  \cite[Proposition 2.6]{NR1}. By \cite[Remark 4, pags. 1984--1985]{NR2}, \[\frac{\chi(G)}{R} \cong \frac{\nu(G)}{\Delta(G)},\]   where $\Delta(G) = \langle [g,g^{\varphi}]  \, | \, g \in G\rangle \leq \nu(G)$. In the group $\nu(G)$ consider the following three subgroups: 

\begin{itemize}
    \item $\Upsilon_1(G) = \langle [g,h^{\varphi}] \ \mid \ g,h \in G \rangle = [G,G^{\varphi}]$.
    \item $\Upsilon_2(G) = \langle [h,g][g,h^{\varphi}] \ \mid \ g,h \in G \rangle$.
    \item $\Upsilon_3(G) = \langle [h^{\varphi},g^{\varphi}] [g,h^{\varphi}] \ \mid \ g,h \in G \rangle$.
\end{itemize}

In \cite{BdOMR}, the authors obtain the following description to the derived subgroup $\nu(G)'$: 

\begin{lem}\cite[Theorem A]{BdOMR} \label{lem:upsilon}
Let $G$ be a group. 
\begin{itemize}
    \item[(i)] Then $\nu(G)'$ is a central product of $\Upsilon_1(G)$, $\Upsilon_1(G)$ and $\Upsilon_3(G)$. 
    \item[(ii)] $\Upsilon_i(G)/\Delta(G)$ is isomorphic to the non-abelian exterior product $G\wedge G$. 
\end{itemize}
\end{lem}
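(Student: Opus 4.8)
The plan, following \cite{BdOMR}, is to verify the three defining properties of a central product directly, and then to deduce (ii) from the tensor-square description of $\Upsilon_1(G)$. For the generation step, first I would record that $\Upsilon_1(G)=[G,G^{\varphi}]$ is generated by commutators, hence lies in $\nu(G)'$, and is normal in $\nu(G)$ with $\nu(G)/\Upsilon_1(G)\cong G\times G$ (via $g\mapsto(g,1)$, $g^{\varphi}\mapsto(1,g)$) --- both being immediate from the defining relations of $\nu(G)$. Consequently $\nu(G)'/\Upsilon_1(G)\cong(G\times G)'=G'\times G'$, so $\nu(G)'=\langle G',(G')^{\varphi},\Upsilon_1(G)\rangle$. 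Since $[h,g]=([h,g][g,h^{\varphi}])\,[g,h^{\varphi}]^{-1}\in\langle\Upsilon_2(G),\Upsilon_1(G)\rangle$ and $[h^{\varphi},g^{\varphi}]=([h^{\varphi},g^{\varphi}][g,h^{\varphi}])\,[g,h^{\varphi}]^{-1}\in\langle\Upsilon_3(G),\Upsilon_1(G)\rangle$, while conversely each generator of $\Upsilon_2(G)$ and $\Upsilon_3(G)$ visibly lies in $\langle G',(G')^{\varphi},\Upsilon_1(G)\rangle=\nu(G)'$, one obtains $\nu(G)'=\langle\Upsilon_1(G),\Upsilon_2(G),\Upsilon_3(G)\rangle$.

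The core of the argument is to show $[\Upsilon_i(G),\Upsilon_j(G)]=1$ for $i\neq j$. It is enough to check this on the displayed generators of the three subgroups, the reduction being legitimate because commutators behave bilinearly on these subgroups (the $\nu(G)$-analogue of Lemma~\ref{lem:L12prod}). The three families of identities --- typically $[\,[a,b^{\varphi}],\,[h,g][g,h^{\varphi}]\,]=1$ together with its companions --- are then established by expanding with the defining relations $[g_1,g_2^{\varphi}]^{g_3}=[g_1^{g_3},(g_2^{g_3})^{\varphi}]=[g_1,g_2^{\varphi}]^{g_3^{\varphi}}$ and the commutator calculus for $\nu(G)$ from \cite{NR1}. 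Granting this, the remainder of (i) is automatic: $[\Upsilon_i(G),\Upsilon_j(G)]=1$ forces each $\Upsilon_i(G)$ to be normal in $\nu(G)'=\Upsilon_1(G)\Upsilon_2(G)\Upsilon_3(G)$, and any $x$ in $\Upsilon_i(G)\cap\prod_{j\neq i}\Upsilon_j(G)$ centralizes $\Upsilon_i(G)$ (because $x\in\prod_{j\neq i}\Upsilon_j(G)$) and centralizes every $\Upsilon_j(G)$ with $j\neq i$ (because $x\in\Upsilon_i(G)$), so $x\in Z(\nu(G)')$.

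For part (ii) and the subgroup $\Upsilon_1(G)$ one invokes Rocco's isomorphism $\Phi\colon G\otimes G\to[G,G^{\varphi}]=\Upsilon_1(G)$, $g\otimes h\mapsto[g,h^{\varphi}]$ \cite[Proposition~2.6]{NR1}: it maps the diagonal subgroup $\nabla(G)=\langle g\otimes g\mid g\in G\rangle$ onto $\Delta(G)=\langle[g,g^{\varphi}]\mid g\in G\rangle$, hence induces $\Upsilon_1(G)/\Delta(G)\cong(G\otimes G)/\nabla(G)=G\wedge G$. For $\Upsilon_2(G)$ and $\Upsilon_3(G)$ I would either run the same computation after checking that $g\otimes h\mapsto[h,g][g,h^{\varphi}]$, respectively $g\otimes h\mapsto[h^{\varphi},g^{\varphi}][g,h^{\varphi}]$, respects the presentation of $G\otimes G$ and carries $\nabla(G)$ onto $\Delta(G)$, or, alternatively, transport the statement through $\nu(G)/\Delta(G)\cong\chi(G)/R(G)$: under this isomorphism $\Upsilon_2(G)$ and $\Upsilon_3(G)$ correspond modulo $R(G)$ to the Sidki subgroups $L_1(G)$ and $L_2(G)$ --- as one sees from the identities of Lemma~\ref{l1} together with $[g,h^{\varphi}]=[g^{\varphi},h]$ in $\chi(G)$ --- and, by the known structure of $\chi(G)$ (cf.\ \cite{Sidki}), each of $L_1(G)$ and $L_2(G)$ is isomorphic to $G\wedge G$ after passing modulo $R(G)$.

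The main obstacle is the commuting step: everything else is formal or a direct appeal to known isomorphisms. The delicate point is that in $\nu(G)$ one does \emph{not} have $[x,y^{\varphi}]=[x^{\varphi},y]$, so the convenient relations of Lemma~\ref{basic.chi} are unavailable and must be replaced by their weaker $\nu(G)$-versions, with correction terms that have to be tracked; showing that these all cancel in $[\,[a,b^{\varphi}],[h,g][g,h^{\varphi}]\,]$ and in $[\,[h,g][g,h^{\varphi}],[c^{\varphi},d^{\varphi}][d,c^{\varphi}]\,]$ is the heart of the matter.
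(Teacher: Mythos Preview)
The paper does not prove this lemma: it is quoted from \cite[Theorem~A]{BdOMR} and used as a black box, so there is no in-paper argument to compare your sketch against.

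Your outline is a plausible reconstruction, but two points deserve comment. First, the appeal to ``the $\nu(G)$-analogue of Lemma~\ref{lem:L12prod}'' in the commutation step is superfluous: to show $[A,B]=1$ for subgroups $A$ and $B$ it always suffices to check that every generator of $A$ commutes with every generator of $B$, via the elementary identity $[\alpha_1\alpha_2,\beta]=[\alpha_1,\beta]^{\alpha_2}[\alpha_2,\beta]$; no bilinearity modulo a centralising subgroup is needed, so there is no hidden circularity there. Second, and more seriously, your alternative route for part~(ii) --- passing through $\nu(G)/\Delta(G)\cong\chi(G)/R(G)$ and invoking that $L_i(G)/R(G)\cong G\wedge G$ ``by the known structure of $\chi(G)$ (cf.\ \cite{Sidki})'' --- is circular in the logical framework of this paper. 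That isomorphism is exactly Lemma~\ref{lem:exterior}(ii), and the paper's proof of Lemma~\ref{lem:exterior} cites Lemma~\ref{lem:upsilon} (i.e.\ \cite[Theorem~A]{BdOMR}); it is not in \cite{Sidki}, which predates the exterior-square formalism. So only your direct alternative --- verifying by hand that $g\otimes h\mapsto[h,g][g,h^{\varphi}]$ and $g\otimes h\mapsto[h^{\varphi},g^{\varphi}][g,h^{\varphi}]$ respect the defining relations of $G\otimes G$ and carry $\nabla(G)$ onto $\Delta(G)$ --- would stand independently.
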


Now, we can rephrase the previous result in the context of $\chi(G)/R$: 

\begin{lem} \label{lem:exterior}
Let $G$ be a group. 
\begin{itemize}
    \item[(i)] The group $\chi(G)'/R$ is a central product of $D/R$, $L_1/R$ and $L_2/R$. 
    \item[(ii)] The subgroups $D/R$, $L_1/R$ and $L_2/R$ are isomorphic to the non-abelian exterior product $G \wedge G$.
\end{itemize}
\end{lem}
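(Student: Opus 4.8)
The plan is to transport Lemma~\ref{lem:upsilon} across the isomorphism $\theta\colon \chi(G)/R \xrightarrow{\sim} \nu(G)/\Delta(G)$ of \cite[Remark 4]{NR2}, which is the natural one induced by $g\mapsto g$ and $g^{\varphi}\mapsto g^{\varphi}$. First I would record two easy facts so that the relevant structure descends to the quotients: $\Delta(G)=\langle[g,g^{\varphi}]\rangle$ is contained in (and central in) $\nu(G)'$, and $R=[G,L,G^{\varphi}]=[L_1,G^{\varphi}]$ is contained in $\chi(G)'$, because $L_1=[L,G]\leq\chi(G)'$. Consequently $(\chi(G)/R)'=\chi(G)'/R$ and $(\nu(G)/\Delta(G))'=\nu(G)'/\Delta(G)$, the central product of Lemma~\ref{lem:upsilon}(i) passes to $\nu(G)'/\Delta(G)$, and $\theta$ restricts to an isomorphism $\chi(G)'/R\xrightarrow{\sim}\nu(G)'/\Delta(G)$. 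Since being a central product is preserved under isomorphism, it then suffices to identify the preimages under $\theta$ of the three factors $\Upsilon_i(G)\Delta(G)/\Delta(G)$ with $D/R$, $L_1/R$ and $L_2/R$.

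The heart of the argument is exactly this identification. Because $\theta$ sends the class of a word in $G\cup G^{\varphi}$ to the class of the same word, it carries $D/R=\langle[g,h^{\varphi}]R\rangle$ onto $\Upsilon_1(G)\Delta(G)/\Delta(G)$. For the other two I would use the commutator calculus of Section~2: by Lemma~\ref{basic.chi}(i) and Lemma~\ref{l1}(i),
\[
[h,g][g,h^{\varphi}]=[h,g][g^{\varphi},h]=[h,g^{-\varphi}g]=[h,[g,\varphi]^{-1}]\in L_1,
\]
and by Lemma~\ref{basic.chi}(i) and Lemma~\ref{l1}(ii),
\[
[h^{\varphi},g^{\varphi}][g,h^{\varphi}]=[h^{\varphi},g^{\varphi}][g^{\varphi},h]=[h^{-1}h^{\varphi},g^{\varphi}]=[[h,\varphi],g^{\varphi}]\in L_2,
\]
for all $g,h\in G$. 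Hence $\theta^{-1}\big(\Upsilon_2(G)\Delta(G)/\Delta(G)\big)\leq L_1/R$ and $\theta^{-1}\big(\Upsilon_3(G)\Delta(G)/\Delta(G)\big)\leq L_2/R$.

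For the reverse inclusions I would use that $\Upsilon_2(G)$ and $\Upsilon_3(G)$ are normal in $\nu(G)$ (cf.\ \cite{BdOMR}), so the two subgroups above are normal in $\chi(G)/R$; and that $L_1=[L,G]$, resp.\ $L_2=[L,G^{\varphi}]$, is the normal closure in $\chi(G)$ of $\{[[g,\varphi],h]\mid g,h\in G\}$, resp.\ $\{[[g,\varphi],h^{\varphi}]\mid g,h\in G\}$. Since $\theta^{-1}\big(\Upsilon_2(G)\Delta(G)/\Delta(G)\big)$ is normal and contains $[h,[g,\varphi]^{-1}]R=[[g,\varphi],h]^{[g,\varphi]^{-1}}R$ (here one uses $[[g,\varphi]^{-1},h]=[[g,\varphi],h]^{-[g,\varphi]^{-1}}$), it contains the conjugate $[[g,\varphi],h]R$ for all $g,h$, hence contains the whole normal closure $L_1/R$; the argument for $L_2/R$ is the same. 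With the three identifications in hand, part~(i) is immediate from Lemma~\ref{lem:upsilon}(i), and part~(ii) from Lemma~\ref{lem:upsilon}(ii), since $\Upsilon_i(G)\Delta(G)/\Delta(G)\cong\Upsilon_i(G)/\Delta(G)\cong G\wedge G$.

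I expect the main obstacle to be precisely this middle step: making the correspondences $\Upsilon_2(G)\leftrightarrow L_1$ and $\Upsilon_3(G)\leftrightarrow L_2$ exact. The inclusions ``$\leq$'' fall straight out of Lemma~\ref{l1}, but the reverse inclusions require a bit of care with conjugation, and this is where the normality of $\Upsilon_2(G),\Upsilon_3(G)$ in $\nu(G)$ and the normal-closure descriptions of $L_1,L_2$ are used to avoid having to control $[L_1,L]$ and $[L_2,L]$ directly. A secondary, routine point is to check the containments and centrality facts ($\Delta(G)\leq Z(\nu(G)')$ and $R\leq\chi(G)'$) that allow the derived subgroup and the central-product structure to be read off in the quotients.
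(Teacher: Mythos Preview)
Your proposal is correct and follows essentially the same route as the paper: transport Lemma~\ref{lem:upsilon} across the isomorphism $\chi(G)/R\cong\nu(G)/\Delta(G)$ from \cite{NR2}. The paper's proof simply cites \cite[Section~2]{NR2} and \cite[Theorem~A(a)]{BdOMR} for the identifications $D/R\cong L_1/R\cong L_2/R\cong G\wedge G$, whereas you spell out the matching $\Upsilon_i\leftrightarrow\{D,L_1,L_2\}$ explicitly via Lemmas~\ref{basic.chi} and~\ref{l1}; this is additional (correct) detail rather than a different argument.
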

\begin{proof}
(1) and (2). According to \cite[Theorem 2.1~(ii)]{NR2}, we deduce that: 
\[ 
\dfrac{\nu(G)}{\Delta(G)} \cong \dfrac{\chi(G)}{R}.
\]    
By \cite[Section 2]{NR2}, $D/R$ is isomorphic to the exterior square $G \wedge G$. By \cite[Theorem A~(a)]{BdOMR}, the subgroups $L_1/R$ and $L_2$ are isomorphic to $G \wedge G$.     

Moreover, according to Lemma \ref{lem:upsilon}~(i), we deduce that the derived group $\nu(G)'/\Delta(G)$ is a central product of $D/R$, $L_1/R$ and $L_2/R$. 
\end{proof}

\section{Exponent \texorpdfstring{\(\exp(R(G))\)}{exp(R(G))}}

In this section, we prove Theorem A and provide additional information on the subgroup $L_{1,2}$ when $G$ is a group with restrictions on its set of commutators.

\subsection{Proof of Theorem A}

First we present some information on the subgroup $L_{1,2}$.

\begin{thm}\label{TL12FG}
Let $G$ be a group. 
\begin{enumerate}[(a)]
    \item $L_{1,2}\leq R$.
    \item Suppose that $G$ and $G'$ are $d$-generated, then $L_{1,2}$ has $d$-bounded rank. 
\end{enumerate}
\end{thm}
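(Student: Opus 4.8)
The plan is to prove (a) by the three subgroups lemma and (b) by a bilinearity argument that collapses both $L_1$ and $L_2$ onto $G'$. For (a), I would first record that $L_1\trianglelefteq\chi(G)$: it is invariant under $G$ because $L$ and $G$ are, and for $G^\varphi$ one writes $g^{h^\varphi}=g[g,h^\varphi]$ with $[g,h^\varphi]\in D$, so $G^{h^\varphi}\subseteq GD$; since $[L,D]=1$ the subgroup $D$ centralizes $L$, hence $[L,GD]=[L,G]=L_1$ and therefore $L_1^{h^\varphi}=[L,G^{h^\varphi}]\subseteq L_1$ (and likewise with $(h^\varphi)^{-1}$, giving equality). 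Granting this, $[L_1,L]\subseteq L_1$ and $R=[G,L,G^\varphi]=[L_1,G^\varphi]\subseteq L_1$, with $R\trianglelefteq\chi(G)$. Now apply the three subgroups lemma to $L,G^\varphi,L_1$ and the normal subgroup $R$: one has $[[L_1,L],G^\varphi]\subseteq[L_1,G^\varphi]=R$ and $[[G^\varphi,L_1],L]=[R,L]\subseteq[W,L]=1$ (as $R\subseteq W$ and $W$ is central in $LD\supseteq L$), so the remaining rotation $[[L,G^\varphi],L_1]=[L_2,L_1]=L_{1,2}$ lies in $R$ as well; this is (a). (Alternatively, (a) is immediate from Lemma \ref{lem:exterior}(i), the factors of a central product commuting.)

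For (b), since $L_{1,2}\subseteq W$ and $W$ is central in $LD$, the group $L_{1,2}$ is abelian and is centralized by $L_1$, $L_2$ and $D$; hence the commutator map $\mu\colon L_1\times L_2\to L_{1,2}$, $(\alpha,\beta)\mapsto[\alpha,\beta]$, is bi-additive. The projection $\pi\colon\chi(G)\to G\times G$ with kernel $D$ sends $L=\langle g^{-1}g^\varphi\mid g\in G\rangle$ onto $\{(a,b)\mid ab\in G'\}$, hence $L_1=[L,G]$ onto $G'\times 1$ and $L_2=[L,G^\varphi]$ onto $1\times G'$. Because $[D,L]=1$ while $L_{1,2}\subseteq Z(LD)$, the element $[\alpha,\beta]$ is unchanged when $\alpha$ is multiplied by a member of $L_1\cap D$ or $\beta$ by a member of $L_2\cap D$ (the ``new'' commutator is trivial and conjugation by a $D$-element fixes $L_{1,2}$); thus $\mu$ factors through $\bigl(L_1/(L_1\cap D)\bigr)\times\bigl(L_2/(L_2\cap D)\bigr)\cong G'\times G'$, and the resulting bi-additive map $G'\times G'\to L_{1,2}$ factors in turn through $(G'/G'')\otimes_{\mathbb Z}(G'/G'')$. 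It is onto, its image being $\langle[\alpha,\beta]\mid\alpha\in L_1,\beta\in L_2\rangle=L_{1,2}$. Hence $L_{1,2}$ is a quotient of $(G'/G'')\otimes_{\mathbb Z}(G'/G'')$; if $G'$ is $d$-generated then $G'/G''$ is generated by $d$ elements and the tensor square by $d^2$, so $L_{1,2}$ has rank at most $d^2$ — in particular $d$-bounded rank. (This uses only that $G'$ is $d$-generated.)

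The delicate points are: in (a), establishing $L_1\trianglelefteq\chi(G)$, whose only nonformal ingredient is $[L,D]=1$; and, more importantly, in (b) the observation that $[L_1,L_2]$ does not see the ``$D$-parts'' $L_1\cap D$ and $L_2\cap D$ of $L_1,L_2$. That collapse — which rests on $[D,L]=1$ together with $L_{1,2}$ being central in $LD$ — is what reduces the whole question to the ordinary tensor square of $G'/G''$, and I expect it to be the conceptual heart of the argument; everything after it is routine bilinear bookkeeping, and the resulting description ``$L_{1,2}$ is a quotient of $(G'/G'')^{\otimes 2}$'' is presumably also what feeds the exponent estimate in Theorem A.
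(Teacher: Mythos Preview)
Your argument is correct, and for both parts it takes a different route from the paper's.

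For (a), the paper simply quotes Lemma~\ref{lem:exterior}(i) (which in turn rests on the $\nu(G)$ machinery of \cite{BdOMR}): since $L_1/R$ and $L_2/R$ commute in the central product $\chi(G)'/R$, one has $[L_1,L_2]\le R$. Your three–subgroups argument is self-contained: once $L_1\trianglelefteq\chi(G)$ and $R\le W\le Z(LD)$ are established, the rotations $[L_1,L,G^\varphi]\le[L_1,G^\varphi]=R$ and $[G^\varphi,L_1,L]=[R,L]=1$ force $[L,G^\varphi,L_1]=L_{1,2}\le R$. This avoids any appeal to $\nu(G)$ or the exterior square.

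For (b) the contrast is sharper. The paper passes through $D/R\cong L_1/R\cong L_2/R\cong G\wedge G$, invokes \cite{DLT} to get a finite generating set $\{[g_i,h_i^\varphi]R\}_{1\le i\le n}$ of $G\wedge G$ (which needs both $G$ and $G'$ finitely generated), and then uses the bilinearity Lemma~\ref{lem:L12prod} together with $[R,L_i]=1$ to conclude that $L_{1,2}$ is generated by the $n^2$ elements $\big[[g_i,\varphi,h_i],[g_j,\varphi,h_j^\varphi]\big]$. You instead collapse along the projection $\pi\colon\chi(G)\to G\times G$ with kernel $D$: the key observation that $[D,L]=1$ and $L_{1,2}\le Z(LD)$ makes $\mu(\alpha,\beta)=[\alpha,\beta]$ insensitive to the $D$-parts of $\alpha,\beta$, so $\mu$ descends to a surjective bi-additive map $G'\times G'\to L_{1,2}$ and hence to $(G'/G'')\otimes_{\mathbb Z}(G'/G'')\twoheadrightarrow L_{1,2}$. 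This is more elementary (no $\nu(G)$, no exterior square, no \cite{DLT}), uses only the hypothesis on $G'$, and yields the explicit bound $\mathrm{rk}(L_{1,2})\le d^2$. As a bonus, your description immediately gives $\exp(L_{1,2})\mid\exp(G'/G'')$, which is in the spirit of (though not identical to) the paper's Theorem~\ref{thm:expL12}(b).
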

\begin{proof}
\begin{enumerate}
    \item[(a)] By Lemma \ref{lem:exterior} item (i), we have that $L_1/R$ commutes with $L_2/R$. Therefore, $L_{1,2}=[L_1,L_2] \leq R$.     

    \item[(b)] Since $G$ and $G'$ are finitely generated, it follows that the non-abelian exterior square is finitely generated (cf. \cite[Proposition 5.1]{DLT}). Set $D/R= \langle \, [g_1,h_1^{\varphi}]R, \ \ldots \ , [g_n,h_n^{\varphi}]R\, \rangle$. Our aim is to prove that \[
    L_{1,2}= \left\langle \big[[g_i,\varphi,h_i],[g_j,\varphi,h_j^{\varphi}]\big]\ \big|\  1\leq i,j\leq n \right\rangle.
    \]
    According to Lemma \ref{lem:exterior} item (ii), we deduce that
    \[
    \frac{D}{R}\cong \frac{L_1}{R} \cong \frac{L_2}{R}.
    \]
    Therefore we can write $L_1= {\langle [g_i,h_i][h_i,g_i^{\varphi}] \ | \  1\leq i\leq n \rangle R}$ and  $L_2={\langle [g_i^{\varphi},h_i^{\varphi}][h_i^{\varphi},g_i] \ | \  1\leq i\leq n \rangle R}$. Moreover, Lemma \ref{lem:L12prod} gives us 
      \begin{align*}
    L_{1,2} =& \left[ \left\langle [g_i, h_i][h_i, g_i^{\varphi}] \mid 1 \leq i \leq n \right\rangle R,  
            \left\langle [g_j^{\varphi}, h_j^{\varphi}][h_j^{\varphi}, g_j] \mid 1 \leq j \leq n \right\rangle R \right] \\
            =& \left[ \left\langle [g_i, h_i][h_i, g_i^{\varphi}] \mid 1 \leq i \leq n \right\rangle,  
            \left\langle [g_j^{\varphi}, h_j^{\varphi}][h_j^{\varphi}, g_j] \mid 1 \leq j \leq n \right\rangle \right] \\
            =& \left\langle \big[[g_i, \varphi, h_i], [g_j, \varphi, h_j^{\varphi}]\big] \mid 1 \leq i,j \leq n \right\rangle,
    \end{align*}
    as required. \qedhere
\end{enumerate}
\end{proof}

The proof of Theorem A will be divided into the following two results.

\begin{thm} \label{thm:quot}
Let $G$ be a group. 
\begin{enumerate} [(a)] 
    \item If $G$ is periodic, then $R/L_{1,2}$ is periodic. 
    \item If $G$ has finite exponent, then $ \exp\left( R/L_{1,2}\right)$ divides $\exp(G)$. 
\end{enumerate} 
\end{thm}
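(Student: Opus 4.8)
The plan is to reduce the statement about $R/L_{1,2}$ to the quotient $R/L_{1,2}$ being generated, as an abelian-modulo-$L_{1,2}$ situation is not quite available, by the elements $[g,\varphi,a,b^{\varphi}]L_{1,2}$ supplied by Lemma \ref{Lm:R/L12}. So the first step is: invoke Lemma \ref{Lm:R/L12} to write $R/L_{1,2} = \langle [g,\varphi,a,b^{\varphi}]L_{1,2} \mid g,a,b \in G\rangle$, and then use the bilinearity relations of Lemma \ref{Lm:R} (items (i) and (ii)) to conclude that, modulo $L_{1,2}$, the map sending a triple of elements to the corresponding generator is ``additive'' in a suitable sense. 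The key point to extract is that $[g,\varphi,a,b^{\varphi}]^n \equiv [g^n,\varphi,a,b^{\varphi}]$ or, more usefully, that the subgroup generated by a single generator $[g,\varphi,a,b^{\varphi}]$ modulo $L_{1,2}$ behaves well under taking powers — this is where I expect to need a short computation showing $[g,\varphi,a,b^{\varphi}]^m \equiv [\,[g,\varphi]^m, a, b^{\varphi}] \pmod{L_{1,2}}$ using Lemma \ref{Lm:R}(i) inductively, and then relating $[g,\varphi]^m$ to $[g^m,\varphi]$ modulo terms that die in $R/L_{1,2}$.

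For part (b): if $\exp(G) = e$, then $g^e = 1$ for all $g \in G$, so (once the power formula above is established) every generator $[g,\varphi,a,b^{\varphi}]$ of $R/L_{1,2}$ satisfies $[g,\varphi,a,b^{\varphi}]^e \equiv [g^e,\varphi,a,b^{\varphi}] = [1,\varphi,a,b^{\varphi}] = 1 \pmod{L_{1,2}}$. Since $R/L_{1,2}$ is generated by these elements and — crucially — it is abelian (indeed $R$ centralizes... well, one must check $R/L_{1,2}$ is abelian; this follows because $R \leq W$ is central in $L D$, hence $[R,R]=1$ outright, so $R$ itself is abelian and no care about commuting generators is needed), every element of $R/L_{1,2}$ is a product of $e$-torsion elements in an abelian group, hence is itself $e$-torsion. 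Thus $\exp(R/L_{1,2})$ divides $e = \exp(G)$. For part (a): if $G$ is periodic, then for each generator $[g,\varphi,a,b^{\varphi}]$ the element $g$ has finite order $n_g$, so $[g,\varphi,a,b^{\varphi}]$ has order dividing $n_g$ in the abelian group $R/L_{1,2}$; since $R/L_{1,2}$ is an abelian group generated by torsion elements, it is periodic.

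The main obstacle, and the only place requiring genuine care, is the power formula $[g,\varphi,a,b^{\varphi}]^m \equiv [g^m,\varphi,a,b^{\varphi}] \pmod{L_{1,2}}$. One route: Lemma \ref{Lm:R}(i) gives $[[g,\varphi][h,\varphi],a,b^{\varphi}] \equiv [g,\varphi,a,b^{\varphi}][h,\varphi,a,b^{\varphi}] \pmod{L_{1,2}}$, so by induction $[[g,\varphi]^m, a, b^{\varphi}] \equiv [g,\varphi,a,b^{\varphi}]^m$. It then remains to relate $[g,\varphi]^m = (g^{-1}g^{\varphi})^m$ to $[g^m,\varphi] = g^{-m}g^{\varphi m}$ inside $L$; the difference between these is a product of commutators $[g^{\varphi},g]$-type elements lying in $D \cap L = W$, and one must check that after applying $[\,\cdot\,,a,b^{\varphi}]$ the contribution of such terms lands in $L_{1,2}$ (or vanishes in $R$). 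This is plausible because $[W, G, G^{\varphi}] \leq [W,G]$ and $W$ being central in $LD$ forces many such brackets to collapse; alternatively one can argue directly that $R/L_{1,2} \cong D/R'$-style isomorphism already established in Lemma \ref{lem:exterior} transports the exponent bound. I would present whichever of these is shortest, most likely the direct inductive computation with Lemma \ref{Lm:R}(i) combined with the identity $[g,\varphi]^m \equiv [g^m,\varphi] \pmod{R}$ which holds because $\chi(G)/R \cong \nu(G)/\Delta(G)$ and in that quotient $L(G)R/R$ is the abelianization-image of $G$.
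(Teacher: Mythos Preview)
Your approach is essentially the paper's: invoke Lemma~\ref{Lm:R/L12}, use Lemma~\ref{Lm:R}(i) inductively to obtain $[g,\varphi,a,b^{\varphi}]^m \equiv [[g,\varphi]^m,a,b^{\varphi}] \pmod{L_{1,2}}$, observe that $R$ is abelian (since $R\leq W\leq Z(LD)$), and conclude.

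The only issue is that the step you flag as the ``main obstacle'' is in fact trivial, and you have overlooked why. You write that it ``remains to relate $[g,\varphi]^m=(g^{-1}g^{\varphi})^m$ to $[g^m,\varphi]=g^{-m}g^{\varphi m}$'' and that ``the difference between these is a product of commutators $[g^{\varphi},g]$-type elements''. But $[g,g^{\varphi}]=1$ is precisely the defining relation of $\chi(G)$, so $g$ and $g^{\varphi}$ commute and
\[
[g,\varphi]^m=(g^{-1}g^{\varphi})^m=g^{-m}(g^{\varphi})^m=(g^m)^{-1}(g^m)^{\varphi}=[g^m,\varphi]
\]
holds exactly in $\chi(G)$, with no error term whatsoever. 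Your proposed workarounds (passing to $\nu(G)/\Delta(G)$, or arguing that the discrepancy lands in $L_{1,2}$) are therefore unnecessary detours; the paper simply writes the identity $[[g,\varphi]^n,a,b^{\varphi}]=[g^n,\varphi,a,b^{\varphi}]$ and is done.
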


\begin{proof}
By Lemma \ref{Lm:R/L12}, the quotient $R/L_{1,2}$ is generated by 
\[\big\langle { [g,\varphi, a,b^{\varphi}]} L_{1,2} \, | \, g,a,b\in G\big\rangle.
\]
Then, it is sufficient to consider the elements:  $[g,\varphi, a,b^{\varphi}]L_{1,2}$. Observe that, by Lemma \ref{Lm:R} item (i), 
    \[
[g,\varphi,a,b^{\varphi}]^nL_{1,2}=[[g,\varphi]^n,a,b^{\varphi}]L_{1,2}= [g^n,\varphi,a,b^{\varphi}]L_{1,2},
    \]
for any positive integer $n$. 

(a) and (b). Now, assume that $G$ is periodic (resp., exponent $m$). Arguing as in the previous paragraph, we deduce that $R/L_{1,2}$ is also periodic (resp., $\exp(R/L_{1,2})$ divides $m$), which completes the proof.
\end{proof}

\begin{remar}
It is worth mentioning that Theorem \ref{thm:quot}~(b) cannot be improved. For instance, the group $G = [81, 12]$, which is the $12$th group of order $81$ in the SmallGroup library \cite{GAP4}, satisfies $\exp(G)=3 = \exp(R/L_{1,2})$.    
\end{remar}

\begin{thm}\label{thm:expL12}
Let $G$ be a group. 
\begin{enumerate} [(a)] 
    \item Suppose that for every $x,y \in G$ there exists a positive integer $m=m(x,y)$ such that $[x,y]^m = 1$. Then $L_{1,2}$ is periodic.
    \item Suppose that there exists a positive integer $m$ such that $[x,y]^m=1$ for every $x,y \in G$. Then $\exp(L_{1,2})$ divides $m$. 
\end{enumerate}    
\end{thm}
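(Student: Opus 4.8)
\textbf{Proof plan for Theorem~\ref{thm:expL12}.}
By Theorem~\ref{TL12FG}~(b) and the proof of Lemma~\ref{Lm:R/L12}, the subgroup $L_{1,2}$ is generated by the elements $\big[[g,\varphi,a],[h,\varphi,b^{\varphi}]\big]$ with $g,h,a,b \in G$, and these generators commute with one another (since $L_{1,2} \leq W \leq Z(L_1 L_2)$). Hence it suffices to bound the order of a single such generator: once every generator has order dividing $m$, commutativity forces $\exp(L_{1,2}) \mid m$, and part~(a) follows in the same way from the hypothesis that each $[x,y]$ has finite order. The whole point is therefore to compute the order of $\big[[g,\varphi,a],[h,\varphi,b^{\varphi}]\big]$ in terms of the orders of commutators in $G$.

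The plan is to exploit bilinearity of the commutator $[\,\cdot\,,\cdot\,] \colon L_1 \times L_2 \to L_{1,2}$. Lemma~\ref{lem:L12prod} says precisely that this pairing is bimultiplicative, so for any $\alpha \in L_1$, $\beta \in L_2$ and any positive integer $n$ we get $[\alpha^n,\beta] = [\alpha,\beta]^n = [\alpha,\beta^n]$. Now apply this with $\alpha = [g,\varphi,a] \in L_1$: I want to identify $\alpha^n$ with $[g,\varphi,a]$ raised to a power that can be absorbed into $g$. The key computation is that, working modulo an appropriate subgroup, $[g,\varphi,a]^n \equiv [g,\varphi,a^n]$ or can be rewritten using $[g,\varphi]^n = [g^n,\varphi]$ together with Lemma~\ref{Lm:R}~(i)--(ii); iterating the argument already used in the proof of Theorem~\ref{thm:quot}, one shows $[g,\varphi,a]^n$ equals $[g,\varphi,a]$ with one of the $G$-entries replaced by an $n$-th power, modulo $L_{1,2}$-type corrections that vanish when we pair against $L_2$. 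Combining the two slots, $\big[[g,\varphi,a],[h,\varphi,b^{\varphi}]\big]^n$ should reduce to a commutator in which a $G$-entry has been replaced by $x^n$ for a suitable $x = x(g,a)$ (or $y(h,b)$); but $[g,\varphi,a]$ already lies in the derived-type structure, and the relevant relation is that $[g,\varphi]$ behaves like $g$ under exponentiation, so raising to the power $m$ — where $m$ kills all commutators $[x,y]$ in $G$ — should trivialise the generator.

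Concretely, I expect the argument to run: write $c = [g,\varphi,a]$ and note $c = [[g,\varphi],a]$, so that $c$ is itself a commutator of the element $[g,\varphi] \in L$ with $a \in G$; similarly $d = [h,\varphi,b^{\varphi}] \in L_2$. Using Lemma~\ref{lem:L12prod} and the identities of Lemma~\ref{Lm:R}, $[c,d]^m = [c^m,d]$, and $c^m = [[g,\varphi],a]^m$ which, modulo the centre of $L_1L_2$ (hence modulo anything that pairs trivially with $d$), equals $[[g,\varphi]^m,a] = [[g^m,\varphi],a]$ up to commutator corrections of the form $[[g,\varphi,a],[g,\varphi]]$ — but the latter lie in $[L_1,L]$ and pair trivially into the relevant section, or are themselves of the type being bounded. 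The crux is to show all the correction terms that arise from the non-commutativity of the ambient group either vanish against $d$ or are visibly annihilated by $m$; once $c^m$ collapses to something built from $[g^m,\varphi]$ and we observe that $[g^m,\varphi] \in L$ behaves (modulo $[L,G]$-corrections) like the element $g^m$, and $[x,y]^m=1$ for all $x,y$ means $[g,a]^m = 1$ so $[[g,\varphi],a]$ is $m$-torsion after stripping the $\varphi$-decoration, we conclude $[c,d]^m = 1$.

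The main obstacle I anticipate is bookkeeping the correction terms: each time I move an exponent through a commutator via the Hall--Witt / bilinearity identities, I pick up terms like $[[g,\varphi],a,[g,\varphi]]$ or $[[g,\varphi,a],[h,\varphi,b^{\varphi}],z]$, and I must argue each such term either lies in a subgroup that centralises the other factor of the pairing (so it drops out of the product by the central-product structure of $\chi(G)'$ from Lemma~\ref{lem:exterior}) or is again an $m$-torsion element of $L_{1,2}$ by induction on commutator weight. Making this induction precise — choosing the right filtration of $R$ so that the leading term is $[g^m,\varphi,a,b^{\varphi}]$-type (already handled by Theorem~\ref{thm:quot}) and the tail is lower-order — is the delicate point; the rest is the bilinearity bookkeeping of Lemmas~\ref{lem:L12prod} and~\ref{Lm:R}.
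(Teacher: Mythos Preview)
Your bilinearity setup is correct and is indeed how the paper begins: by Lemma~\ref{lem:L12prod} the pairing $L_1\times L_2\to L_{1,2}$ is bimultiplicative, so it suffices to bound the order of a single generator $[c,d]$ with $c=[a,\varphi,b]\in L_1$ and $d=[g,\varphi,h^{\varphi}]\in L_2$. The gap is in your endgame. You push the exponent into the $L_1$ slot, aiming for $[c^m,d]$ with $c^m\equiv [[g^m,\varphi],a]$, and then you assert that ``$[x,y]^m=1$ for all $x,y$ means $[g,a]^m=1$, so $[[g,\varphi],a]$ is $m$-torsion after stripping the $\varphi$-decoration''. That inference is not valid: the hypothesis kills $[g,a]^m$, not $g^m$, and there is no reason $[[g^m,\varphi],a]$ should vanish (or pair trivially with $d$). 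Your correction-term bookkeeping would at best produce an element built from $g^m$, which the hypothesis does not control.

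The paper avoids this by working on the \emph{other} slot and using a decomposition you never invoke. By Lemma~\ref{l1}(ii) one has
\[
d=[g,\varphi,h^{\varphi}]=[g^{\varphi},h^{\varphi}]\cdot[h,g^{\varphi}],
\]
a product of a genuine commutator in $G^{\varphi}$ and an element of $D$. Since $c\in L$ and $[L,D]=1$, the $D$-factor disappears when paired with $c$, and because the result lies in $R\le W\le Z(LD)$ the remaining conjugation also drops. Thus $[c,d]=\big[c,[g^{\varphi},h^{\varphi}]\big]$. Now bilinearity (together with the fact that $[g^{\varphi},h^{\varphi}]\in L_2\cdot D$, so conjugation by it is trivial on $R$) gives
\[
[c,d]^n=\big[c,[g^{\varphi},h^{\varphi}]^n\big],
\]
and the right-hand side is trivial as soon as $n$ is the order of $[g,h]$ in $G$. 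This is exactly where the hypothesis on commutator orders enters, and it enters cleanly with no correction terms at all. Your plan can be salvaged symmetrically --- decompose $c$ as $[a,b]\cdot(\text{element of }D)$ via Lemma~\ref{l1}(i) and push the exponent onto $[a,b]$ --- but the route through $[[g,\varphi]^m,a]=[[g^m,\varphi],a]$ is a dead end.
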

\begin{proof}
Choose arbitrarily $a,b,g,h\in G$. Set $x= \big[[a,\varphi,b],[g,\varphi,h^{\varphi}]\big]$. 
First, we prove that $x^n = \big[[a,\varphi,b],[g^{\varphi},h^{\varphi}]^n\big]$. We use induction on $n$. For $n=1$, we have
    \begin{align*}
        x&= \big[[a,\varphi,b],[g,\varphi,h^{\varphi}]\big]\\
        &= \big[[a,\varphi,b],[g^{\varphi},h^{\varphi}][h,g^{\varphi}]\big] \ \ \ \ ( Lemma \ \ref{l1}\ item \ (iv))\\
        &= \big[[a,\varphi,b],[h,g^{\varphi}]\big] \big[[a,\varphi,b],[g^{\varphi},h^{\varphi}]\big]^{[h,g^{\varphi}]}\\
        &= \big[[a,\varphi,b],[g^{\varphi},h^{\varphi}]\big]^{[h,g^{\varphi}]} \ \ \ \ \big(\big[[a,\varphi,b],[h,g^{\varphi}]\big]\in [D,L]=1 \big) \\
        &= \big[[a,\varphi,b],[g^{\varphi},h^{\varphi}]\big] \ \ \ \big( \big[[a,\varphi,b],[g^{\varphi},h^{\varphi}]\big]\in R\big).
    \end{align*}
 Now, let us suppose that the identity holds for $n$. 
    Then,
      \begin{align*}
        x^{n+1}&= \big[[a,\varphi,b],[g,{\varphi},h]\big]^{n+1}\\
         &=\big[[a,\varphi,b],[g,{\varphi},h]\big]^{n}\big[[a,\varphi,b],[g,{\varphi},h]\big]\\
        &=\big[[a,\varphi,b],[g,{\varphi},h]^{n}\big]\big[[a,\varphi,b],[g,{\varphi},h]\big] \\
        &=\big[[a,\varphi,b],[g^{\varphi},h^{\varphi}]^n[g^{\varphi},h^{\varphi}]\big] 
      \ \ \ (Lemma \ \ref{lem:L12prod})\\
        &=\big[[a,\varphi,b],[g^{\varphi},h^{\varphi}]^{n+1}\big]
    \end{align*}

Consequently, the order of an element $x$ divides $m$, where $m$ is the order of the commutator $[g,h]$. 

(a) Suppose that for every $x,y \in G$ there exists a positive integer $m=m(x,y)$ such that $[x,y]^m = 1$. Since $L_{1,2}$ is abelian and $a,b,g,h$ have been chosen arbitrarily, we now conclude that $L_{1,2}$ is periodic.

(b) Suppose that $[x,y]^m=1$ for every $x,y \in G$. Arguing as in the previous paragraph the exponent $\exp(L_{1,2})$ divides $m$. 
\end{proof}

\begin{remar}
The bound obtained in Theorem \ref{thm:expL12}~(b) is somehow sharp. Indeed, the group $G = [243,37]$, which is the $37$th group of order $243$ in the SmallGroup library \cite{GAP4}, satisfies $\exp(G') = 3 = \exp(L_{1,2})$. See also Remark \ref{rem:exp} below.     
\end{remar}

Now we will deal with Theorem A: {\it Let $G$ be a group. 

\begin{enumerate} [(a)]
    \item If $G$ is periodic, then $R$ is periodic.
    \item If $G$ has finite exponent, then $\exp(R)$ divides $\exp(G) \cdot \exp(G')$.
\end{enumerate}
}
\begin{proof}[Proof of Theorem A]
\begin{enumerate}[(a)]
    \item Assume that $G$ is periodic. According to Theorem \ref{thm:quot}~(a) and Theorem \ref{thm:expL12}~(a), we deduce that $R/L_{1,2}$ and $L_{1,2}$ are periodic. Consequently, $R$ is periodic. 
    \item By Theorem \ref{thm:quot}~(b), $\exp(R/L_{1,2})$ divides $\exp(G)$. Theorem \ref{thm:expL12}~(b) now shows that $\exp(L_{1,2})$ divides $\exp(G')$. It follows that $\exp(R)$ divides $\exp(G) \cdot \exp(G')$.  \qedhere
\end{enumerate}
\end{proof}

\begin{remar}
\label{rem:example}
We do not know whether the bound obtained in Theorem A~(b) is sharp, as our proof does not allow us to go further. At this moment, we have no examples that attain the bound in Theorem A~(b). For instance, if $G$ is the $12$th group of order $81$ in the SmallGroup library \cite{GAP4}, then $\exp(G) = 3 = \exp(R)$.
\end{remar}

In \cite{BdMdOM}, the authors provide bounds for the exponent $\exp(R(G))$ of a solvable group $G$ in terms of its derived length. By combining Theorem A and \cite[Theorem 1.6]{BdMdOM}, one obtains

\begin{cor}
Let $p$ be a prime and $G$ a $p$-group solvable with derived length $d$. Then 
 $$\exp(R(G)) \ \ |\ \ \left\{\begin{array}{rl}
 \min \{\exp(G')\cdot\exp(G), \ 2^d \cdot \exp(G')^{d-1}\} & p=2;\\
 \min \{\exp(G')\cdot\exp(G),\ \exp(G')^{d-1}\} & p>2.
 \end{array}\right.$$    
\end{cor}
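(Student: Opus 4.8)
The statement to prove is the final corollary: for a prime $p$ and a solvable $p$-group $G$ with derived length $d$, the exponent $\exp(R(G))$ divides $\min\{\exp(G')\exp(G),\, 2^d\exp(G')^{d-1}\}$ when $p=2$, and $\min\{\exp(G')\exp(G),\, \exp(G')^{d-1}\}$ when $p>2$.

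\medskip

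\noindent\textbf{Proof plan.} The plan is to simply combine the two divisibility statements that are now available. On the one hand, Theorem~A~(b) gives that $\exp(R(G))$ divides $\exp(G)\cdot\exp(G')$ — this requires $G$ to have finite exponent, which is automatic here since a solvable $p$-group of finite derived length is locally finite but, more to the point, we should note that the hypotheses of \cite[Theorem 1.6]{BdMdOM} already force $G$ to have finite exponent, so Theorem~A applies. On the other hand, \cite[Theorem 1.6]{BdMdOM} supplies the second bound in each case: $\exp(R(G))$ divides $2^d\cdot\exp(G')^{d-1}$ for $p=2$ and $\exp(G')^{d-1}$ for $p>2$. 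Since a natural number that is divisible by each of two integers $a$ and $b$ need not be divisible by $\min\{a,b\}$ in general, the key observation to make explicit is that here both $a=\exp(G)\exp(G')$ and $b$ (the BdMdOM bound) are powers of $p$; among powers of the same prime, divisibility is a total order, so $\exp(R(G))$ — itself a power of $p$, being the exponent of a section of a $p$-group — divides whichever of $a,b$ is smaller, i.e.\ divides $\gcd(a,b)=\min\{a,b\}$.

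\medskip

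\noindent The steps, in order: (1) Observe $R(G)$ is a $p$-group (it is a subgroup of $\chi(G)$, which for a $p$-group $G$ is a $p$-group by the results of \cite{Sidki}, or more directly $R(G)$ is a section of $G\times G\times G$-type data; in any case for the statement we only need that $\exp(R(G))$ is a power of $p$, which follows from Theorem~A once we know $\exp(G)$ is a $p$-power). (2) Apply Theorem~A~(b): $\exp(R(G)) \mid \exp(G)\exp(G')$. (3) Apply \cite[Theorem 1.6]{BdMdOM} to get the derived-length bound for each parity of $p$. (4) Conclude: since all three quantities $\exp(R(G))$, $\exp(G)\exp(G')$, and the BdMdOM bound are powers of $p$, and $\exp(R(G))$ divides the latter two, it divides their minimum.

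\medskip

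\noindent\textbf{Main obstacle.} There is essentially no deep obstacle — the corollary is a formal consequence of two cited/proved results. The only subtlety, and the point that must be handled with a sentence of justification rather than left implicit, is the passage from ``divides both $a$ and $b$'' to ``divides $\min\{a,b\}$'': this fails for general integers and is valid here only because we are working inside the lattice of $p$-power integers, where divisibility is total. I would also double-check that the finite-exponent hypothesis of Theorem~A is met; for a solvable $p$-group of derived length $d$ this is not automatic without some finiteness assumption, so I would either assume (as \cite[Theorem 1.6]{BdMdOM} presumably does) that $G$ has finite exponent, or note that the displayed bound $2^d\exp(G')^{d-1}$ is vacuous/infinite otherwise and the statement should be read with $\exp$ finite throughout.
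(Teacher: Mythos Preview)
Your proposal is correct and follows the same approach as the paper, which simply states that the corollary is obtained ``by combining Theorem~A and \cite[Theorem~1.6]{BdMdOM}'' with no further argument. Your explicit justification of the passage from ``divides both bounds'' to ``divides their minimum'' via the total order on $p$-powers is a point the paper leaves implicit, and your remark that the finite-exponent hypothesis of Theorem~A must be in force (implicitly inherited from the setting of \cite[Theorem~1.6]{BdMdOM}) is likewise a reasonable caveat the paper does not spell out.
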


\subsection{Finiteness properties of commutators} In this subsection we derived some information on the subgroup $L_{1,2}$, when $G$ is a group subject to restrictions on its set of commutators. More precisely, Theorem \ref{thm:expL12} reveals a connection between the set of commutators (of $G$) and the subgroup $L_{1,2} \leq \chi(G)$. The groups that appear in this context are related to the famous Burnside problem (and MacDonald's question \cite[Question 13.34]{MK}). 

Let $m,n$ be positive integers. Let $F_m$ be the free group of rank $m$. The Burnside group $B(m,n)$ is given by the factor group $F_m/F^{n}_m$. In \cite{AN}, Adjan and Novikov prove that $B(m,n)$ is infinite for $n$ odd, $n \geq 4381$ (see also, Ol'shanskii \cite{Ol}). We obtain the following related result. 

\begin{cor}
Let $G = B(m,n)$ the Burnside group. Then $L_{1,2}$ is finite.     
\end{cor}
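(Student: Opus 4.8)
The plan is to derive this corollary directly from the machinery already in place, specifically Corollary B together with Theorem \ref{thm:expL12}~(a). The group $G = B(m,n)$ is finitely generated (by the $m$ free generators) and periodic (every element has order dividing $n$). Therefore Corollary B applies verbatim and yields that $L_{1,2}(B(m,n))$ is finite. So the first and essentially only step I would write is: ``By definition $B(m,n) = F_m/F_m^n$ is $m$-generated, and every element satisfies $x^n = 1$, so $B(m,n)$ is a finitely generated periodic group; now apply Corollary B.''

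If one wishes to be slightly more self-contained and quantitative, I would instead route through Theorem \ref{TL12FG}~(b) and Theorem \ref{thm:expL12}~(b). Since $B(m,n)$ and its derived subgroup $B(m,n)'$ are both finitely generated (the derived subgroup of a finitely generated group of finite exponent is finitely generated, e.g. because $B(m,n)$ is an extension of $B(m,n)'$ by a finite abelian group, so $B(m,n)'$ has finite index), Theorem \ref{TL12FG}~(b) gives that $L_{1,2}$ has $m$-bounded rank, i.e. is finitely generated. On the other hand, for any $x,y \in B(m,n)$ the commutator $[x,y]$ has order dividing $n$, so by Theorem \ref{thm:expL12}~(b) we get $\exp(L_{1,2}) \mid n$. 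Since $L_{1,2}$ is abelian (it centralizes $L_1$ and $L_2$, and $L_{1,2} \leq L_1 \cap L_2$), a finitely generated abelian group of finite exponent is finite, hence $L_{1,2}$ is finite. This argument also makes the bound explicit: $|L_{1,2}| \leq n^r$ where $r$ is the ($m$-bounded) rank.

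I would present the short version as the main proof and perhaps mention the quantitative refinement in a sentence. There is essentially no obstacle here: all the real work was done in establishing Theorem \ref{TL12FG}, Theorem \ref{thm:expL12} and Corollary B. The only point requiring a word of care is the finite generation of $B(m,n)'$, which is needed for the quantitative route but not for the route through Corollary B (where periodicity plus finite generation of $G$ already suffice). For safety I would note that $B(m,n)/B(m,n)'$ is a finitely generated abelian group of exponent dividing $n$, hence finite, so $B(m,n)'$ has finite index in a finitely generated group and is therefore itself finitely generated. With that remark in hand the proof is a two-line invocation of the preceding results.

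\begin{proof}
By definition $G = B(m,n) = F_m/F_m^n$ is generated by the images of the $m$ free generators of $F_m$, and every element $x \in G$ satisfies $x^n = 1$; thus $G$ is a finitely generated periodic group. Corollary B now shows that $L_{1,2} = L_{1,2}(G)$ is finite.
\end{proof}
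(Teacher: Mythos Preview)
Your argument is correct, and your self-contained ``quantitative'' route is precisely the argument the paper uses. One point on presentation: in the paper this corollary is stated \emph{before} Corollary~B (the paper writes ``We can refine the previous result as follows'' and then states Corollary~B), and no separate proof is given for the Burnside case. So your short proof that simply invokes Corollary~B is a forward reference in the paper's logical order. The paper's implicit proof is exactly what you wrote as the longer version, which coincides with the proof it then gives for Corollary~B: $G/G'$ is finitely generated abelian of exponent dividing $n$, hence finite, so $G'$ has finite index and is finitely generated; Theorem~\ref{TL12FG}(b) then makes $L_{1,2}$ finitely generated, Theorem~\ref{thm:expL12}(b) gives $\exp(L_{1,2})\mid n$, and $L_{1,2}$ is abelian, so it is finite. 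Present that as the main proof to respect the paper's ordering.
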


We can refine the previous result as follows. 

\begin{corB} \label{cor:periodic}
Let $G$ be a finitely generated periodic group. Then $L_{1,2}$ is finite. 
\end{corB}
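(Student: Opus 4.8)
The plan is to combine Theorem \ref{TL12FG}~(b) with Theorem \ref{thm:expL12}~(a). Let $G$ be a finitely generated periodic group. Since $G$ is finitely generated, so is $G'$ (indeed $G$ is $d$-generated for some $d$, and, being periodic, $G'$ is generated by finitely many commutators — this is the standard fact that in a finitely generated group the derived subgroup is the normal closure of finitely many commutators, hence finitely generated once $G/G'$ is finite, which holds here as $G/G'$ is finitely generated abelian and periodic, thus finite). Therefore the hypotheses of Theorem \ref{TL12FG}~(b) are met, and we conclude that $L_{1,2}$ has finite rank; say $L_{1,2}$ is generated by the finitely many elements $\big[[g_i,\varphi,h_i],[g_j,\varphi,h_j^{\varphi}]\big]$ with $1 \leq i,j \leq n$.

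Next I would observe that $L_{1,2}$ is abelian (it lies in $W = L_1 \cap L_2$ and centralizes both $L_1$ and $L_2$, as recalled in the paragraph before Lemma \ref{lem:L12prod}), so it is a finitely generated abelian group. To conclude finiteness it therefore suffices to show $L_{1,2}$ is periodic, which is precisely Theorem \ref{thm:expL12}~(a): since $G$ is periodic, for every $x,y \in G$ the commutator $[x,y]$ has finite order, so the hypothesis of Theorem \ref{thm:expL12}~(a) is satisfied and $L_{1,2}$ is periodic. A finitely generated abelian periodic group is finite, so $L_{1,2}$ is finite.

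The only genuinely delicate point is verifying that $G'$ is finitely generated, so that Theorem \ref{TL12FG}~(b) applies; everything else is an immediate assembly of results already proved. Concretely: write $G = \langle x_1,\dots,x_d\rangle$; since $G$ is periodic and finitely generated, $G^{\mathrm{ab}} = G/G'$ is a finitely generated periodic abelian group, hence finite, so $G'$ has finite index in $G$; a subgroup of finite index in a finitely generated group is itself finitely generated (Reidemeister–Schreier), whence $G'$ is finitely generated. Thus both $G$ and $G'$ are finitely generated, Theorem \ref{TL12FG}~(b) gives that $L_{1,2}$ has finite rank, and combining with periodicity (Theorem \ref{thm:expL12}~(a)) and the abelianness of $L_{1,2}$ yields that $L_{1,2}$ is finite.
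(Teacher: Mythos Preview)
Your proof is correct and follows essentially the same route as the paper's: use finiteness of $G/G'$ to deduce that $G'$ is finitely generated, apply Theorem~\ref{TL12FG}(b) to get $L_{1,2}$ finitely generated, then invoke Theorem~\ref{thm:expL12} together with the abelianness of $L_{1,2}$ to conclude finiteness. Your justification that $G'$ is finitely generated (via finite index and Reidemeister--Schreier) is in fact more explicit than the paper's, and your appeal to part~(a) of Theorem~\ref{thm:expL12} for periodicity is the precise statement needed.
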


\begin{proof}
As $G$ is periodic, we have $G/G'$ is finite. From  this we deduce that the derived subgroup $G'$ is also finitely generated. Consequently,  by Theorem \ref{TL12FG}~(b), $L_{1,2}$ is finitely generated. Theorem \ref{thm:expL12} now shows that $L_{1,2}$ has finite exponent. Since $L_{1,2}$ is abelian, we conclude that $L_{1,2}$ is finite, as required.     
\end{proof}

It is possible the exponent $\exp(L_{1,2})$ may be finite while the derived subgroup $G'$ is non-periodic. 

\begin{remar} \label{rem:exp}
In \cite{DK}, Deryabina and Kozhevnikov shows that there exist a positive integer $n$ and a group $K$ in which $[x,y]^n=1$ for every $x,y \in K$ and the derived subgroup is non-periodic (see also, Adjan \cite{A}). By Theorem \ref{thm:expL12}~(b), the exponent $\exp(L_{1,2}(K))$ divides $n$.  
\end{remar}

\section{Some proprieties of the derived and lower central series of \texorpdfstring{$\chi(G)$}{χ(G)}}

\subsection{Derived series of $\chi(G)$}

According to \cite[Proposition 4.1.4]{Sidki}, we deduce that the derived subgroup of $\chi(G)$ satisfies $\chi(G)' \leqslant D(G)L(G)$. In this subsection we describe the derived series of $\chi(G)$ as the central product of the corresponding terms of the derived series of the subgroups $D$, $L_1$, $L_2$ (and $L_{1,2}$). For reader's convenience we restate Theorem C.

\begin{thmC}
Let $G$ be a group.   
\begin{enumerate}[(a)]
    \item The derived subgroup $\chi(G)'$ is the central product of  $D$ and  $L_1L_2$.
    \item The second derived subgroup $\chi(G)^{(2)}$ is given by the product of subgroups $D'L_1'L_2'L_{1,2}$.
    \item If $k \geq 2$, then $\chi(G)^{(k+1)}$ is the central product of $D^{(k)}$, $ L_1^{(k)}$ and $L_2^{(k)}$.
\end{enumerate}
\end{thmC}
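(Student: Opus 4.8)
The plan is to build the three parts inductively, using as the backbone the description modulo $R$ coming from Lemma \ref{lem:exterior} together with the inclusion $\chi(G)' \leq D L$ from \cite[Proposition 4.1.4]{Sidki}. The guiding principle is that $D$ commutes with $L$ (since $L(G)$ and $D(G)$ commute in $\chi(G)$), that $L_1$ and $L_2$ commute modulo $R$ and in fact $[L_1,L_2]=L_{1,2}\leq W\cap R$ is central in $L_1 L_2$ by Lemma \ref{TL12FG}~(a) and \cite[Lemma 4.1.10]{Sidki}, and that $R\leq W$ is central in $LD$. So at each stage the three "pieces" $D$, $L_1$, $L_2$ interact only through central subgroups, which is exactly what is needed to recognize central products.

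First I would prove (a). By \cite[Proposition 4.1.4]{Sidki} we have $\chi(G)'\leq DL$, and conversely $D=[G,G^\varphi]\leq\chi(G)'$ while $L_1=[L,G]$ and $L_2=[L,G^\varphi]$ are obviously in $\chi(G)'$; the content is that $L\cap\chi(G)'$ is generated by $L_1L_2$ modulo the part already inside $D$, which I would extract from Lemma \ref{lem:exterior}~(i): passing to $\chi(G)/R$, the derived subgroup is the central product of $D/R$, $L_1/R$, $L_2/R$, and lifting back (using $R\leq L_1$, $R\leq D$, and $R$ central) gives $\chi(G)'=D\,L_1L_2$ with $D$ and $L_1L_2$ normal, $[D,L_1L_2]\leq[D,L]=1$, and $D\cap L_1L_2\leq D\cap L=W\leq Z(\chi(G)')$. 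That establishes the central product structure.

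Next, (b). Taking derived subgroups in (a): $\chi(G)^{(2)}=(DL_1L_2)'$. Since $D$ commutes with $L_1L_2$, the commutator subgroup of the product $D\cdot(L_1L_2)$ is $D'\,(L_1L_2)'$. For $(L_1L_2)'$, Lemma \ref{lem:L12prod} shows commutators are bilinear on $L_1\cdot L_2$, hence $(L_1L_2)'=L_1'L_2'[L_1,L_2]=L_1'L_2'L_{1,2}$. Combining, $\chi(G)^{(2)}=D'L_1'L_2'L_{1,2}$. (Here I would note these four subgroups are all normal in $\chi(G)$ — $L_{1,2}\leq R$ is normal, the others manifestly so — so the product is a subgroup; the statement only claims it is this product of subgroups, not a central product, which is consistent with the fact that $L_{1,2}$ need not be centralized by $D'$... although in fact $[D,L]=1$ forces $[D',L_{1,2}]=1$ too, so one even gets a central product, but I would only claim what is stated.)

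Finally, (c), by induction on $k\geq 2$. The base case $k=2$ is the claim that $\chi(G)^{(3)}=(\chi(G)^{(2)})'=(D'L_1'L_2'L_{1,2})'$. Since $L_{1,2}$ is abelian and central in $L_1L_2\supseteq L_1'L_2'$, and $D'$ commutes with $L_1'L_2'L_{1,2}$, the derived subgroup of this product is $D''(L_1'L_2')'=D''L_1''L_2''[L_1',L_2']$; and $[L_1',L_2']\leq[L_1,L_2]=L_{1,2}$, which is central, hence $[L_1',L_2']$ is killed one more commutator down — more precisely $[L_1',L_2']\leq Z(L_1L_2)$, so it does not survive, giving $\chi(G)^{(3)}=D''L_1''L_2''=D^{(2)}L_1^{(2)}L_2^{(2)}$, and the pairwise commutators vanish ($[D,L]=1$ and $[L_1^{(2)},L_2^{(2)}]\leq L_{1,2}$, central), with intersections landing in $W\leq Z$. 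For the inductive step, assuming $\chi(G)^{(k)}=D^{(k-1)}L_1^{(k-1)}L_2^{(k-1)}$ is a central product with $k\geq 3$, I would take derived subgroups: the cross terms all vanish by the central product hypothesis, so $\chi(G)^{(k+1)}=D^{(k)}L_1^{(k)}L_2^{(k)}$, and the central product properties are inherited since $[L_1^{(k)},L_2^{(k)}]\leq[L_1',L_2']\leq L_{1,2}$ is central in $L_1L_2$ hence centralizes $L_1^{(k)}L_2^{(k)}$, while $[D^{(k)},L_i^{(k)}]=1$.

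The main obstacle I anticipate is the bookkeeping around $L_{1,2}$ at the $k=2$ level: one must be careful that $[L_1',L_2']$ (which lies in $L_{1,2}$) genuinely drops out when passing from $\chi(G)^{(2)}$ to $\chi(G)^{(3)}$, i.e. that it is absorbed into $L_1''L_2''$ or is trivial — this uses that $L_{1,2}$ is central in $L_1L_2$, so $[L_1',L_2']\leq Z(L_1L_2)$ contributes nothing to the commutator subgroup of $L_1'L_2'L_{1,2}$. Verifying that this central-product structure is then self-propagating for $k\geq 2$ is the crux; once the $k=2$ case is pinned down, the induction is formal. The other routine-but-necessary point is checking the intersection condition $K_i\cap\prod_{j\neq i}K_j\leq Z(\chi(G))$ in each central product, for which the key fact is $L\cap D=W$ together with $W$ being central in $LD$ (and $R\leq W$), so any element common to a "$D$-piece" and an "$L$-piece" lies in $W$.
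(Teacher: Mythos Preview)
Your overall strategy coincides with the paper's: part~(a) via $\chi(G)'=DL_1L_2$ with $D\cap L_1L_2\leq W$ central (the paper cites \cite[Proposition~4.1.4]{Sidki} directly rather than routing through Lemma~\ref{lem:exterior}, but either works), part~(b) by straightforward commutator calculus, and part~(c) by induction. However, there is a genuine gap in your treatment of~(c).

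You compute $(D'L_1'L_2'L_{1,2})'=D^{(2)}L_1^{(2)}L_2^{(2)}[L_1',L_2']$ and then argue that $[L_1',L_2']$ ``does not survive'' because it lies in $L_{1,2}\leq Z(L_1L_2)$. This reasoning is wrong: a subgroup being central does not remove it from a derived subgroup (think of any class-$2$ nilpotent group, where the derived subgroup is central but nontrivial). What you need is $[L_1',L_2']=1$, not merely that it is central. The same issue reappears in your verification of the central-product condition $[L_1^{(k)},L_2^{(k)}]=1$: you only argue this commutator is central, which does not meet the definition.

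The paper fills this gap via Lemma~\ref{lem:L12prod}. Bilinearity of the pairing $L_1\times L_2\to L_{1,2}$ (together with $L_{1,2}$ abelian) gives, for $\alpha_1,\alpha_2\in L_1$ and $\beta\in L_2$,
\[
[[\alpha_1,\alpha_2],\beta]=[\alpha_1,\beta]^{-1}[\alpha_2,\beta]^{-1}[\alpha_1,\beta][\alpha_2,\beta]=1,
\]
so in fact $[L_1',L_2]=1$, and in particular $[L_1',L_2']=1$ and $[L_1^{(k)},L_2^{(k)}]=1$ for all $k\geq1$. With this one fact in hand your induction goes through exactly as you wrote it; without it, the argument for~(c) is incomplete.
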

\begin{proof} 
\begin{enumerate}[(a)]
    \item By \cite[Proposition 4.1.4]{Sidki}, we can argue that $\chi(G)'= D L_1 L_2$ and $D\cap L_1L_2\leq D\cap L=W\leq C_{\chi(G)}(\chi(G)')$. Therefore, $\chi(G)'$ is the central product of $D$ and $L_1 L_2$.
    \item Using commutator calculus and the previous item, we obtain that
    \[ 
    \chi(G)^{(2)}=[D L_1L_2,D L_1L_2]= D' L_1'L_2'L_{1,2}. 
    \]
    \item We argue by induction on $k\geq 3$. Using commutator calculus, we obtain that 
    \begin{align*}    
        \chi(G)^{(3)}&= [D' L_1'L_2'L_{1,2},D' L_1'L_2'L_{1,2}]=[D' L_1'L_2',D' L_1'L_2']\\
        &= D^{(2)}L_1^{(2)}L_2^{(2)} [L'_{1},L'_2]. 
    \end{align*}
    By Lemma \ref{lem:L12prod}  we have that $[L'_{1},L'_2]=1$. Moreover,
    \[
    D^{(2)}\cap L_1^{(2)},D^{(2)}\cap L_2^{(2)},L_1^{(2)}\cap L_2^{(2)}\leq W\leq C_{\chi(G)}(\chi(G)^{(3)}).
    \]
    Suppose that for $k\geq 3$,  
    \[
    \chi(G)^{(k)}= D^{(k-1)}L_1^{(k-1)}L_2^{(k-1)}. 
    \]
    Consequently,
    \[
    \chi(G)^{(k+1)}= D^{(k)}L_1^{(k)}L_2^{(k)} [L_1^{(k-1)},L_2^{(k-1)}]. 
    \]
    Observe that $[L_1^{(k-1)},L_2^{(k-1)}]\leq [L'_1,L'_2]=1$ and 
    \[
    D^{(k)}\cap L_1^{(k)},D^{(k)}\cap L_2^{(k)},L_2^{(k)}\cap L_1^{(k)}\leq W\leq C_{\chi(G)}(\chi(G)^{(k+1)}). 
    \]
    Therefore, if $k\geq 3$, 
    \[
    \chi(G)^{(k)}= D^{(k-1)}L_1^{(k-1)}L_2^{(k-1)}. 
    \]
    And it is a central product as required. \qedhere
\end{enumerate}
\end{proof}

\subsection{Lower central series of \texorpdfstring{$\chi(G)$}{χ(G)}}

We start with the following lemma  related to the lower central series $\gamma_n(\chi(G))$. 

\begin{lem}\label{Lm:Rcontas} Let $ g,a,b \in G$ and $\alpha\in L$. Then 
  \begin{itemize}
      \item[(i)]$[\alpha,a,g,b^{\varphi}]=[\alpha,a,b^{\varphi},g]$;
      \item[(ii)]$[\alpha,a^{\varphi},g^{\varphi},b]=[\alpha,a^{\varphi},b,g^{\varphi}].$ 
  \end{itemize}
\end{lem}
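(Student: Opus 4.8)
The statement to prove is Lemma~\ref{Lm:Rcontas}: for $g,a,b\in G$ and $\alpha\in L$, we have $[\alpha,a,g,b^{\varphi}]=[\alpha,a,b^{\varphi},g]$ and, symmetrically, $[\alpha,a^{\varphi},g^{\varphi},b]=[\alpha,a^{\varphi},b,g^{\varphi}]$. The plan is to reduce both identities to a single commuting statement: namely that the element $[\alpha,a]$ (which lies in $L_1=[L,G]$) has the property that $[[\alpha,a],g]$ and $[[\alpha,a],b^{\varphi}]$ commute, so that the outer two entries of the length-four commutator can be swapped via the Hall--Witt / Jacobi identity. More precisely, writing $u=[\alpha,a]\in L_1$, we want $[u,g,b^{\varphi}]=[u,b^{\varphi},g]$; by the standard commutator identity $[u,g,b^{\varphi}]=[u,b^{\varphi},g]\cdot c$ where $c$ is a product of conjugates of $[u,g,b^{\varphi}]$ and $[u,b^{\varphi},g]$ themselves together with a term coming from $[[u,g],[u,b^{\varphi}]^{-1}]$-type expressions, the equality will follow once we know these correction terms are trivial.

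The key structural input is that $L_1=[L,G]$ and $L_2=[L,G^{\varphi}]$, and that $[u,g]\in L_1$ while $[u,b^{\varphi}]\in L_2$ (here $u\in L_1\le L$, so $[u,g]\in[L,G]=L_1$ and $[u,b^{\varphi}]\in[L,G^{\varphi}]=L_2$). Then $[[u,g],[u,b^{\varphi}]]\in[L_1,L_2]=L_{1,2}$, which by \cite[Lemma 4.1.10]{Sidki} lies in $W=L_1\cap L_2$ and is therefore central in $L_1L_2$. Moreover, $[u,g,b^{\varphi}]$ and $[u,b^{\varphi},g]$ both lie in $R=R(G)$ (since $[u,g]\in L_1\le L$ gives $[u,g,b^{\varphi}]\in[L,G,G^{\varphi}]$-type subgroups contained in $R=[G,L,G^{\varphi}]$, using Lemma~\ref{basic.chi} to move the $\varphi$'s around), and $R$ is central in $L(G)D(G)$. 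Using these centrality facts, every correction term in the commutator expansion either lies in $R$ and is centralized by the relevant conjugating elements, or lies in $L_{1,2}$ and is central; carefully bookkeeping shows all of them collapse and we are left with $[u,g,b^{\varphi}]=[u,b^{\varphi},g]$. The second identity (ii) is obtained from (i) by applying $\varphi$ and using Lemma~\ref{basic.chi}(i)--(iii) to interchange $G$ and $G^{\varphi}$ roles, or simply by running the same argument with the roles of $G$ and $G^{\varphi}$ swapped.

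The main obstacle I expect is the bookkeeping in the commutator expansion: one must expand $[u,g,b^{\varphi}]$ versus $[u,b^{\varphi},g]$ using a Hall--Witt-type identity and track each correction factor, verifying that it lands in a subgroup ($R$ or $L_{1,2}\le W$) on which the ambient conjugations act trivially, so that all corrections cancel. This is essentially the same style of computation as in the proof of Lemma~\ref{Lm:R}, where expressions are manipulated modulo $L_{1,2}$ and repeated appeals to ``this lies in $R$'' or ``this lies in $L_{1,2}$'' are used to kill conjugation superscripts; the only care needed is that here we want an exact equality, not an equality modulo $L_{1,2}$, so one must also check that the $L_{1,2}$-valued terms themselves cancel in pairs rather than merely being absorbed. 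Once the identity $[u,g,b^{\varphi}]=[u,b^{\varphi},g]$ is established for $u=[\alpha,a]$, both parts of the lemma follow immediately.
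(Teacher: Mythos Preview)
Your plan has the right instinct---use that the relevant length-four commutators lie in $R$ and that $R$ is central in $LD$---but there is a genuine gap. The ``standard commutator identity'' you invoke, of the form $[u,g,b^{\varphi}]=[u,b^{\varphi},g]\cdot c$ with $c$ built from conjugates of the two sides and from $[[u,g],[u,b^{\varphi}]]$, is not a real identity: Hall--Witt (with $x=u$, $y=g$, $z=b^{\varphi}$) produces instead a third term of the shape $[[g,b^{-\varphi}],u]$, i.e.\ a commutator of an element of $D=[G,G^{\varphi}]$ with $u\in L$. That term is \emph{not} of the $L_{1,2}$ type you describe, and centrality of $R$ or of $L_{1,2}$ in $L_1L_2$ does nothing to it. What kills it is the basic fact $[L,D]=1$ (stated in the Introduction of the paper), which you never invoke. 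Without $[L,D]=1$ your bookkeeping cannot close; with it, the $L_{1,2}$ machinery is unnecessary, since already $[u,b^{\varphi}]\in[L_1,G^{\varphi}]=R$ is central in $LD$, so $[[u,g],[u,b^{\varphi}]]=1$ outright rather than merely lying in $L_{1,2}$.

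The paper's proof is not a Hall--Witt argument at all but a short direct expansion: writing $u=[\alpha,a]$, one expands $[u,g,b^{\varphi}]=[u^{-1}u^{g},b^{\varphi}]$, uses that $[u^{\pm1},b^{\varphi}]\in R$ to drop conjugation superscripts, rewrites $[u^{g},b^{\varphi}]=[u,(b^{\varphi})^{g^{-1}}]^{g}$, expands $(b^{\varphi})^{g^{-1}}=b^{\varphi}[b^{\varphi},g^{-1}]$, and then kills the cross term $[u,[b^{\varphi},g^{-1}]]$ using $[L,D]=1$. Two facts do all the work: $R$ is central in $LD$, and $[L,D]=1$. If you want to salvage your Hall--Witt route, the same two facts suffice (the middle Hall--Witt term lies in $[D,L]=1$, and $R$-centrality lets you strip the remaining conjugates), but you must name $[L,D]=1$ explicitly rather than appeal to $L_{1,2}$.
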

\begin{proof} 
We will prove only the first item, as the second follows similarly.
     \begin{align*}
        [\alpha,a,g,b^{\varphi}]
        &= [[\alpha,a]^{-1}[\alpha,a]^{g},b^{\varphi}]\\
        &= [[\alpha,a]^{-1},b^{\varphi}]^{[\alpha,a]^{g}}[[\alpha,a]^{g},b^{\varphi}]\\
        &= [[\alpha,a]^{-1},b^{\varphi}][[\alpha,a]^{g},b^{\varphi}] \ \ \ \ ([[\alpha,a]^{-1},b^{\varphi}]\in R)\\
        &= [\alpha,a,b^{\varphi}]^{-[\alpha,a]^{-1}}[\alpha,a,(b^{\varphi})^{g^{-1}}]^{g}\\
        &= [\alpha,a,b^{\varphi}]^{-1}[\alpha,a,(b^{\varphi})^{g^{-1}}]^{g} \ \ \ \ \ ([\alpha,a,b^{\varphi}]^{-1}\in R )\\
        &= [\alpha,a,b^{\varphi}]^{-1}[\alpha,a,b^{\varphi}[b^{\varphi},{g^{-1}}]]^{g}\\
        &= [\alpha,a,b^{\varphi}]^{-1}\big([\alpha,a,[b^{\varphi},{g^{-1}}]][\alpha,a,b^{\varphi}]^{[b^{\varphi},{g^{-1}}]}\big)^{g}\\
        &=[\alpha,a,b^{\varphi}]^{-1}[\alpha,a,b^{\varphi}]^{g} \ \ \ \ ([\alpha,a,[b^{\varphi},{g^{-1}}]]\in [L,D]; [\alpha,a,b^{\varphi}]\in R)\\
        &=[\alpha,a,b^{\varphi}]^{-1}[\alpha,a,b^{\varphi}][\alpha,a,b^{\varphi},g]\\
        &=[\alpha,a,b^{\varphi},g]. \qedhere
    \end{align*}
\end{proof}

The statement below includes Theorem D~(a) from the Introduction. 

\begin{lem}\label{[L_1,G]normal} Let $G$ be a group and let $n$ be a positive integer. Then the subgroups $[D,_nG]$, $[L_1,_nG]$ $[L_2,_nG^{\varphi}]$ are normal subgroups $\chi(G)$. 
\end{lem}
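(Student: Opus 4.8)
The plan is to exhibit, for each of the three subgroups, a generating set that is manifestly stable under conjugation by the generators $G\cup G^{\varphi}$ of $\chi(G)$, so that normality follows once we check invariance on these generators. Consider first $[L_1,{}_nG]=[L,G,{}_{n}G]$. This subgroup is generated by the left-normed commutators $[\alpha,a_0,a_1,\ldots,a_n]$ with $\alpha\in L$ and $a_i\in G$; here I will use the standard fact that $L$ is normal in $\chi(G)$ (it is the kernel of $\chi(G)\to G$), so conjugation of such a generator by an element $g\in G$ sends it to another generator of the same shape (expand $[\alpha^g,a_0^g,\ldots,a_n^g]$ using the commutator identities, noting $\alpha^g\in L$). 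The subtle point is conjugation by an element $g^{\varphi}\in G^{\varphi}$: here I would use Lemma~\ref{basic.chi}~(iii) together with Lemma~\ref{Lm:Rcontas} to move the ``$\varphi$-twisted'' conjugations past the commutator, reducing everything back to conjugation by untwisted elements of $G$ up to correction terms that themselves lie in $[L_1,{}_nG]$. The case of $[L_2,{}_nG^{\varphi}]$ is entirely symmetric under the automorphism interchanging the two copies of $G$.

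For $[D,{}_nG]$, the argument is cleaner because $D=[G,G^{\varphi}]$ is already normal in $\chi(G)$ (kernel of $\chi(G)\to G\times G$), and by Lemma~\ref{basic.chi}~(ii)--(iii) conjugation of a generator $[g,h^{\varphi}]$ by any element of $G$ or $G^{\varphi}$ reduces to conjugation by an element of $G$, which lands back in $D$. So $[D,G]$ is generated by elements $[g,h^{\varphi},a]$ with $g,h,a\in G$, and I would then show by induction on $n$ that $[D,{}_nG]$ is generated by $[g,h^{\varphi},a_1,\ldots,a_n]$ and that conjugating such a generator by $z$ or $z^{\varphi}$ ($z\in G$) yields, modulo lower-weight terms already in $[D,{}_nG]$, a generator of the same form — the key being that the ``$\varphi$'s'' can be pushed to the outermost $h^{\varphi}$ slot using the basic relations, and that by Lemma~\ref{Lm:Rcontas} the positions of the $G$-entries and $G^{\varphi}$-entries past a commutator can be interchanged freely inside $R$.

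The step I expect to be the main obstacle is the handling of conjugation by elements of $G^{\varphi}$ on the generators of $[L_1,{}_nG]$: a priori $[\alpha,a_0,\ldots,a_n]^{z^{\varphi}}$ is not obviously of the required form, since $z^{\varphi}$ does not normalize $G$ (only $L$, $D$, $W$ are obviously normal). The way around this is that the innermost commutator $[\alpha,a_0]$ with $\alpha\in L$ lies in $L_1\le R\cdot(\text{stuff})$, and once enough $G$-commutators have been applied the element sits inside $R$ (indeed $[L_1,{}_nG]\le R$ for $n\ge 1$ by the results on $R$), where Lemma~\ref{basic.chi}~(iii) tells us conjugation by $z^{\varphi}$ equals conjugation by $z$; the remaining care is needed only for the outermost layer, which one treats by hand using the commutator expansion $[x,yz]=[x,z][x,y]^z$ and absorbing the correction terms. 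I would organize the whole proof as a single induction on $n$, establishing simultaneously that each of the three subgroups is normal and that it is generated by the evident left-normed commutators, so that the inductive step for normality has the generating-set description available at the previous level.
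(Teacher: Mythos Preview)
Your inductive skeleton and the identification of conjugation by $G^{\varphi}$ as the only nontrivial case are exactly what the paper does, and your treatment of $[D,{}_nG]$ is fine (the paper argues via $\chi(G)=LG$ and $[D,L]=1$, but your route through Lemma~\ref{basic.chi}(iii) works equally well since $[D,{}_nG]\le D$).

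The genuine gap is in your handling of $[L_1,{}_nG]$. The claim ``$[L_1,{}_nG]\le R$ for $n\ge 1$'' is false. Recall from Lemma~\ref{lem:exterior} that $L_1/R\cong G\wedge G$, and the conjugation action of $G$ on $L_1/R$ corresponds to the diagonal action $g\wedge h\mapsto g^a\wedge h^a$; this is nontrivial as soon as $G$ is nonabelian in any serious way, so $[L_1,G]\not\subseteq R$ in general. Consequently the elements you want to conjugate do \emph{not} lie in $D$, and Lemma~\ref{basic.chi}(iii) does not apply to them. Your proposed mechanism for reducing $z^{\varphi}$-conjugation to $z$-conjugation therefore collapses.

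The fix, which is what the paper does, avoids trying to land inside $D$ altogether. One first notes $\chi(G)=D\cdot G\cdot G^{\varphi}$ and that $D$ centralises $[L_1,{}_nG]\le L$ (since $[D,L]=1$), so only $G^{\varphi}$ needs checking. Then
\[
[L_1,{}_nG]^{G^{\varphi}}\le [L_1,{}_nG]\cdot\bigl[[L_1,{}_{n-1}G],G,G^{\varphi}\bigr],
\]
and Lemma~\ref{Lm:Rcontas} (the swap lemma you cite) gives $\bigl[[L_1,{}_{n-1}G],G,G^{\varphi}\bigr]\le\bigl[[L_1,{}_{n-1}G],G^{\varphi},G\bigr]$. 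For the base case $n=1$ this is $[L_1,G^{\varphi},G]=[R,G]\le[L_1,G]$ because $R\le W\le L_1$; for $n>1$ the inductive hypothesis gives $[L_1,{}_{n-1}G,G^{\varphi}]\le[L_1,{}_{n-1}G]$, whence the correction term lands in $[L_1,{}_nG]$. So the swap lemma is indeed the key, but it is used to push the single $G^{\varphi}$ inward and then absorb it via induction and $R\le L_1$, not to argue that the whole commutator already sits in $R$.
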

\begin{proof}
We will proceed by induction on $n$. For $n=1$, we have that 
\begin{align*}
    [L_1,G]^{\chi(G)}&= [L_1,G]^{DGG^{\varphi}}\\
    &= [L_1,G]^{G^{\varphi}}\\
    &\leq [L_1,G][L_1,G,G^{\varphi}]\\
    &\leq [L_1,G][L_1,G^{\varphi},G] \ \ \ \ (Lemma \ \ref{Lm:Rcontas})\\
    &\leq [L_1,G][R,G]\\
    &\leq [L_1,G].
 \end{align*}
Therefore, $[L_1,G]\lhd \chi(G)$. Analogously, $[L_2,G^{\varphi}]\lhd\chi(G)$. Now, suppose that $[L_1,_nG],[L_2,_nG^{\varphi}]\lhd \chi(G)$. Then, is enough to prove that $[L_1,_{n+1}G]^{G^{\varphi}}\leq [L_1,_{n+1}G]$ and $ [L_2,_{n+1}G^{\varphi}]^{G}\leq [L_2,_{n+1}G^{\varphi}]$. By Lemma \ref{Lm:Rcontas}, 
    \begin{align*}
        [L_1,_{n+1}G]^{G^{\varphi}}&\leq [L_1,_{n+1}G][[L_1,_{n}G],G,{G^{\varphi}}]\\
        &\leq [L_1,_{n+1}G][[L_1,_{n}G],{G^{\varphi}},G]\\
        &\leq [L_1,_{n+1}G].
    \end{align*}
    We can also prove the case of $[L_2,_{n+1}G^{\varphi}]$ analogously. 
    
Now, we prove that $[D,_nG]$ is a normal subgroup of $\chi(G)$. We proceed by induction. First, by \cite[Lemma 4.1.3]{Sidki}, the subgroup $D$ is normal in $\chi(G)$, then
 \[
     [D,G]^{\chi(G)}= [D,G]^{LG}
     = [D,G]^{G}
     = [D^G,G]
     \leq [D,G].
\]
Suppose that it goes for $n$, then 
    \begin{align*}
        [D,_{n+1}G]^{G^{\varphi}}=[D,_{n+1}G]^{G}=\left[[D,_{n}G]^{G},G^{G}\right]=[D,_{n+1}G],
    \end{align*}
which completes the proof. 
\end{proof}

\begin{lem}\label{RnaIntersessao}
    Let $G$ be a group and and let $n$ be a positive integer. Then 
    $[R,_nG]\leq [D,_{n+1}G] \cap [L_1,_{n+1}G] \cap [L_2,_{n+1}G^{\varphi}].$
\end{lem}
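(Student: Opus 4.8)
The statement asserts that $[R,_nG]$ sits inside all three of $[D,_{n+1}G]$, $[L_1,_{n+1}G]$, and $[L_2,_{n+1}G^{\varphi}]$. The plan is to handle the three containments by reducing each one to the case $n=0$ (i.e. to showing $R \leq [D,G] \cap [L_1,G] \cap [L_2,G^{\varphi}]$) and then iterating. The key observation is that by Lemma \ref{Lm:R/L12} together with Theorem \ref{TL12FG}(a), the subgroup $R$ is generated modulo $L_{1,2}$ by elements of the form $[g,\varphi,a,b^{\varphi}]$, and $L_{1,2}$ itself is generated by elements $\big[[g,\varphi,a],[h,\varphi,b^{\varphi}]\big]$; so it suffices to see that \emph{each such generator} already lies in the relevant subgroup.

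\textbf{First the containment in $[L_1,_{n+1}G]$.} A generator $[g,\varphi,a,b^{\varphi}]$ can be rewritten using Lemma \ref{Lm:Rcontas}(i): $[g,\varphi,a,b^{\varphi}] = [[g,\varphi],a,b^{\varphi}]$, and since $[g,\varphi] \in L$, the element $[[g,\varphi],a] \in L_1$, hence $[[g,\varphi],a,b^{\varphi}] \in [L_1, G^{\varphi}]$. But by Lemma \ref{Lm:Rcontas} applied in the other order, $[L_1,G^{\varphi}] \leq [L_1,G][R,\ldots]$, or more directly: the computation in Lemma \ref{[L_1,G]normal} shows $[L_1,G^{\varphi}] \leq [L_1,G][L_1,G,G^{\varphi}] \leq [L_1,G]$ modulo terms in $[R,G] \leq [L_1,G]$ — so in fact $R \leq [L_1,G]$. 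For the $L_{1,2}$ generators, note $[g,\varphi,a] \in L_1$, so $\big[[g,\varphi,a],[h,\varphi,b^{\varphi}]\big] \in [L_1, L] \leq [L_1, LG] $; one checks this lies in $[L_1,G]$ using that $L = \langle [g,\varphi] \mid g \in G\rangle$ and normality arguments. Thus $R \leq [L_1,G]$. Then apply $[{-},_nG]$: since $R \leq [L_1,G]$ and (by Lemma \ref{[L_1,G]normal}) $[L_1,_{n+1}G]$ is normal, we get $[R,_nG] \leq [[L_1,G],_nG] = [L_1,_{n+1}G]$. The containment in $[L_2,_{n+1}G^{\varphi}]$ is obtained by the symmetric argument, using Lemma \ref{Lm:Rcontas}(ii) and item (v) of Lemma \ref{basic.chi} to swap a $G$-conjugation for a $G^{\varphi}$-conjugation: $[g,\varphi,a,b^{\varphi}]$ should also be expressible (via Lemma \ref{basic.chi}) as an element of $L_2$ conjugated by $G^{\varphi}$, placing $R$ inside $[L_2,G^{\varphi}]$, and then $[R,_nG] \leq [R,_nG^{\varphi}]$-type manipulations (again using Lemma \ref{basic.chi}(iii)) finish it.

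\textbf{The containment in $[D,_{n+1}G]$.} Here I would use that $R = [G, L(G), G^{\varphi}] = [L_1, G^{\varphi}]$ by definition (from the Introduction, $R(G) = [G,L(G),G^{\varphi}]$, and $[G,L(G)] = [L(G),G] = L_1$). Since $L_1 \leq D L$ — indeed, using Lemma \ref{l1}, $[g,\varphi] = [h,g]$-type expansions show $L_1 = [L,G] \leq [D,G]\cdot(\text{lower terms})$, or more cleanly $L_1 \leq D L_{\text{something}}$ — one shows $R = [L_1, G^{\varphi}] \leq [D, G^{\varphi}] \cdot (\ldots) \leq [D,G]$ modulo the commuting of $L$ and $D$. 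Concretely: $[g,\varphi,a] = [[g,\varphi],a]$ and by Lemma \ref{l1}(i), $[h, g^{-\varphi}g] = [g^\varphi, h][h,g]$, which ties $[g,\varphi]$-type elements to $D$-elements times $G'$-elements; pushing this through $[\ ,a,b^\varphi]$ and using that $[L,D]=1$ should land the generators of $R$ in $[D,G^\varphi] \leq [D,G]$ (normality of $[D,G]$, Lemma \ref{[L_1,G]normal}). Then iterate with $[{-},_nG]$.

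\textbf{Main obstacle.} The delicate point is the $[D,_{n+1}G]$ containment: unlike the $L_1$ and $L_2$ cases, where $R \leq [L_1,G]$ and $R \leq [L_2,G^\varphi]$ follow fairly directly from the generator description plus Lemma \ref{Lm:Rcontas}, relating $R$ to $[D,G]$ requires converting the "$\varphi$-twist'' encoded in $[g,\varphi] = g^{-1}g^\varphi$ into genuine $D = [G,G^\varphi]$-commutators, which is exactly where Lemma \ref{l1} and the identity $[L,D]=1$ must be combined carefully; keeping track of which correction terms land in $R$ (hence can be absorbed by induction) versus which must be shown to vanish is the part that needs the most attention. I expect the cleanest route is: first prove the $n=0$ statement $R \leq [D,G]\cap[L_1,G]\cap[L_2,G^\varphi]$ by the generator analysis above, then deduce the general $n$ at once by applying $[{-},_nG]$ and invoking normality of the three target subgroups from Lemma \ref{[L_1,G]normal}.
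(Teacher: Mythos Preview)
Your overall plan---reduce to the base case $R\le [D,G]\cap[L_1,G]\cap[L_2,G^{\varphi}]$ and then iterate using normality (Lemma \ref{[L_1,G]normal}) together with the fact that $[R,G]=[R,G^{\varphi}]$ from Lemma \ref{basic.chi}(ii)--(iii)---is exactly what the paper does, and your sketch for the $[D,G]$ containment is essentially the paper's computation. The problem is the base case $R\le[L_1,G]$ (and symmetrically $R\le[L_2,G^{\varphi}]$).

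You write that ``the computation in Lemma \ref{[L_1,G]normal} shows $[L_1,G^{\varphi}]\le[L_1,G][L_1,G,G^{\varphi}]$''. It does not: that lemma shows $[L_1,G]^{G^{\varphi}}\le[L_1,G][L_1,G,G^{\varphi}]$, a statement about \emph{conjugation}, not about the commutator subgroup $[L_1,G^{\varphi}]$. Since $[L_1,G^{\varphi}]=R$ is precisely the thing you are trying to place inside $[L_1,G]$, your argument at this point is circular. Lemma \ref{Lm:Rcontas}(i) doesn't help either: it has the shape $[\alpha,a,g,b^{\varphi}]=[\alpha,a,b^{\varphi},g]$ with \emph{four} entries, whereas your generator $[[g,\varphi],a,b^{\varphi}]$ has only three, so there is no $G$-slot to swap into. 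The detour through Lemma \ref{Lm:R/L12} and a separate treatment of $L_{1,2}$ is also unnecessary and leaves you with the vague claim ``$[L_1,L]\le[L_1,G]$'', which you do not justify and which again unravels to needing $R\le[L_1,G]$.

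What the paper actually does for $R\le[L_1,G]$ is a direct rewrite that you are missing: using Lemma \ref{l1} one has $[\varphi,g,h]=[g,h^{\varphi}][h,g]$, and a short computation replacing $a^{\varphi}$ by $a\cdot(a^{-1}a^{\varphi})$ gives
\[
[\varphi,g,h,a^{\varphi}]=[\varphi,g,h,a]\cdot\bigl[h^{a},g^{a},\,a^{-1}a^{\varphi}\bigr].
\]
The first factor is visibly in $[L,G,G]=[L_1,G]$; the second factor lies in $[L_1,G]$ by the Hall--Witt identity (the two companion terms $[g^{a},(a^{-1}a^{\varphi})^{\pm1},h^{a}]$ and $[(a^{-1}a^{\varphi}),(h^{a})^{\pm1},g^{a}]$ are in $[L,G,G]$, and $[L_1,G]\trianglelefteq\chi(G)$). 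This is the missing idea in your $L_1$ (and $L_2$) case.
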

\begin{proof}
    Observe that is enough to prove that 
    \[
    R\leq [D,G]\cap [L_1,G]\cap [L_2,G^{\varphi}].
    \]
    Let $g,h,a\in G$. We want to prove that $[\varphi,g,h,a^{\varphi}]$ belongs to each subgroup of the intersection. In fact 
    \begin{align*}
       [\varphi,g,h,a^{\varphi}] &= [[g,h^{\varphi}][h,g],a^{\varphi}]\\
       &= [g,h^{\varphi},a^{\varphi}]^{[h,g]}[h,g,a^{\varphi}]\\
       &= [g,h^{\varphi},a]^{[h,g]}[h,g,a^{\varphi}].
    \end{align*}
    
Once $[D,G]$ is a normal subgroup of $\chi(G)$ and by Hall-Witt Identity, we have that $[h,g,a^{\varphi}]\in [D,G]$. Therefore, $R\leq [D,G]$. 

Now, without loss of generality, we will prove the result for $[L_1,G]$ as the proof for $[L_2,G^{\varphi}]$ follows analogously. 
 \begin{align*}
       [\varphi,g,h,a^{\varphi}] &= [[g,h^{\varphi}][h,g],a^{\varphi}]\\
       &= ([g,h^{\varphi}][h,g])^{-1} ([g,h^{\varphi}][h,g])^{a^{\varphi}}\\
       &= ([g,h^{\varphi}][h,g])^{-1} [g,h^{\varphi}]^{a^{\varphi}}[h,g]^{a^{\varphi}}\\
       &= ([g,h^{\varphi}][h,g])^{-1} [g,h^{\varphi}]^{a}[h,g]^{a(a^{-1}a^{\varphi})}\\
       &= ([g,h^{\varphi}][h,g])^{-1} [g,h^{\varphi}]^{a}[h,g]^a\big[[h,g]^a,(a^{-1}a^{\varphi})\big]\\
       &= ([g,h^{\varphi}][h,g])^{-1} ([g,h^{\varphi}][h,g])^a\big[[h,g]^a,(a^{-1}a^{\varphi})\big]\\
       &= ([g,h^{\varphi}][h,g])^{-1} ([g,h^{\varphi}][h,g])^a\big[[h,g]^a,(a^{-1}a^{\varphi})\big]\\
       &=[\varphi,g,h,a] \big[h^a,g^a,(a^{-1}a^{\varphi})\big].
    \end{align*}

Again, by Hall-Witt Identity and by Lemma \ref{[L_1,G]normal}, $\big[h^a,g^a,(a^{-1}a^{\varphi})\big]\in [L_1,G]$. Consequently, $R\leq [D,G]{\cap [L_1,G]\cap [L_2,G^{\varphi}]}.$
\end{proof}

The following statement includes Theorem D~(b) from the Introduction. 

\begin{prop}\label{EstruturaGamma(chi)}
     Let $G$ be a group. Then
    \[
\gamma_n(\chi(G))= [D,_{n-2}G][L_1,_{n-2}G][L_2,_{n-2}G^{\varphi}]
    \]
    if $n\geq 3$.
\end{prop}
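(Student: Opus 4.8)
The plan is to prove the two inclusions separately, using induction on $n \geq 3$ together with the normality results already established. For the base case $n = 3$, I would start from Theorem C~(a), which tells us $\chi(G)' = \chi(G)^{(1)} = DL_1L_2$ (as a central product), and then compute $\gamma_3(\chi(G)) = [\chi(G)',\chi(G)] = [DL_1L_2, G\,G^{\varphi}\,D]$. Expanding this commutator over the generating sets, the key observations are: commutators of $D, L_1, L_2$ with each other lie in $W$ (central), so they contribute nothing to $\gamma_3$; commutators with $D$ can be absorbed since $D$ is abelian modulo... actually more carefully, one uses that $[D,D] \le D$, $[L_i, D] \le W$, etc. The upshot is $\gamma_3(\chi(G)) = [D,G][L_1,G][L_2,G^{\varphi}]$ modulo checking that $[L_1, G^{\varphi}]$ and $[L_2, G]$ land in the right place — and this is exactly where Lemma~\ref{Lm:Rcontas} enters, letting us rewrite $[L_1, \ldots, G^{\varphi}]$-type terms as $[L_1, \ldots, G]$-type terms (together with $[L_1,G^\varphi] \le R \le [L_1,G]$ from Lemma~\ref{RnaIntersessao}).

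For the inductive step, assume $\gamma_n(\chi(G)) = [D,_{n-2}G][L_1,_{n-2}G][L_2,_{n-2}G^{\varphi}]$ for some $n \ge 3$. Then
\[
\gamma_{n+1}(\chi(G)) = [\gamma_n(\chi(G)), \chi(G)] = \bigl[\,[D,_{n-2}G][L_1,_{n-2}G][L_2,_{n-2}G^{\varphi}],\ G\,G^{\varphi}\,\bigr],
\]
where I can restrict the second argument to the generators $G \cup G^{\varphi}$ since $D \le \gamma_2$ contributes to a higher term. By Lemma~\ref{[L_1,G]normal} each of the three factors $[D,_{n-2}G]$, $[L_1,_{n-2}G]$, $[L_2,_{n-2}G^{\varphi}]$ is normal in $\chi(G)$, so the product is normal and I may expand the outer commutator factor-by-factor: $[\gamma_n, \chi(G)]$ is generated by $[[D,_{n-2}G], x]$, $[[L_1,_{n-2}G], x]$, $[[L_2,_{n-2}G^{\varphi}], x]$ for $x \in G \cup G^{\varphi}$, because cross-terms $[[A,_{n-2}\cdot], [B,_{n-2}\cdot]]$ with $A \neq B$ lie deep inside $W$ (central, hence killed) or inside one of the target subgroups. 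For the factor $[L_1,_{n-2}G]$, commuting with $G$ directly gives $[L_1,_{n-1}G]$; commuting with $G^{\varphi}$ gives $[L_1,_{n-2}G, G^{\varphi}]$, which by repeated application of Lemma~\ref{Lm:Rcontas} (the identity $[\alpha,a,g,b^{\varphi}] = [\alpha,a,b^{\varphi},g]$ pushing the $\varphi$-letter inward) plus Lemma~\ref{RnaIntersessao} collapses into $[L_1,_{n-1}G]$. Symmetrically for $[L_2,_{n-2}G^{\varphi}]$. For $[D,_{n-2}G]$, commuting with either $G$ or $G^{\varphi}$ reduces to $[D,_{n-1}G]$ using that $D$ is normal and that the $\varphi$-action on $D$ agrees with the plain action modulo the relations (Lemma~\ref{basic.chi}~(iii)). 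This yields $\gamma_{n+1}(\chi(G)) = [D,_{n-1}G][L_1,_{n-1}G][L_2,_{n-1}G^{\varphi}]$, completing the induction.

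The reverse inclusion — that each of $[D,_{n-2}G]$, $[L_1,_{n-2}G]$, $[L_2,_{n-2}G^{\varphi}]$ is contained in $\gamma_n(\chi(G))$ — is the easier direction: since $D, L_1, L_2 \le \chi(G)' = \gamma_2(\chi(G))$ (by Theorem~C~(a)), we get $[D,_{n-2}G] \le [\gamma_2(\chi(G)),_{n-2}\chi(G)] = \gamma_n(\chi(G))$, and likewise for the other two factors; the product of three subgroups of $\gamma_n$ is again in $\gamma_n$.

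I expect the main obstacle to be the bookkeeping in the inductive step: one must carefully justify that all the "cross terms" arising when expanding $[\prod_i A_i, \prod_j x_j]$ are absorbed — this requires knowing $[L_i,_{k}G]$ and $[L_j,_{k}G^\varphi]$ centralize each other modulo the relevant subgroups (ultimately because $[L_1,L_2] \le W$ is central in $LD$, combined with the normality from Lemma~\ref{[L_1,G]normal}) — and that the $\varphi$-decorated iterated commutators genuinely collapse via Lemma~\ref{Lm:Rcontas}, which only directly handles a single $\varphi$-letter being transposed past one plain letter, so an inner induction (or a careful combinatorial argument on the position of the $\varphi$-letter) may be needed to handle arbitrarily long iterated commutators $[L_1,_{n-2}G,G^{\varphi}]$.
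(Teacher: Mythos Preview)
Your approach is correct and follows essentially the same inductive scheme as the paper: establish the base case $n=3$ by expanding $[\chi(G)',\chi(G)]$, then in the inductive step use the normality of the three factors (Lemma~\ref{[L_1,G]normal}), collapse the ``wrong-side'' commutators $[L_1,_{k}G,G^{\varphi}]$ and $[L_2,_{k}G^{\varphi},G]$ via Lemma~\ref{Lm:Rcontas}, and absorb the resulting $R$-terms using Lemma~\ref{RnaIntersessao}. The one cosmetic difference is that the paper writes $\chi(G)=GL$ and exploits $[D,L]=1$ to kill the $D$-with-$L$ terms outright, whereas you use $\chi(G)=GG^{\varphi}$ and invoke Lemma~\ref{basic.chi}\,(iii) to identify the $G$- and $G^{\varphi}$-actions on $D$; both decompositions lead to the same place. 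Your closing worry about needing an inner induction to iterate Lemma~\ref{Lm:Rcontas} across a long string $[L_1,_{n-2}G,G^{\varphi}]$ is well-placed---the paper asserts this step in one line, but the mechanism is exactly the repeated transposition you describe, terminating in $[R,_{n-2}G]$, which Lemma~\ref{RnaIntersessao} then absorbs.
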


\begin{proof}
    We proceed by induction on $n \geq 3$. For $n=3$,
\begin{align*}
    \gamma_3(\chi(G))&=[DL_1L_2,GL]\\
    &= [D,G][L_1,G][L_2,G][L_1,L][L_2,L]\\
    &= [D,G][L_1,G]R[L_1,L][L_2,L]
\end{align*}
Observe that $[L_1,L]\leq [L_1,G][L_1,G^{\varphi}] $ and $[L_2,L]\leq [L_2,G][L_2,G^{\varphi}]$. Then,
\begin{align*}
    \gamma_3(\chi(G))&= [D,G][L_1,G]R[L_1,L][L_2,L]\\
    &\leq [D,G][L_1,G]R [L_1,G][L_1,G^{\varphi}] [L_2,G][L_2,G^{\varphi}]\\ 
    &\leq [D,G][L_1,G][L_2,G^{\varphi}]R \\
    &\leq [D,G][L_1,G][L_2,G^{\varphi}]. \ \ \ (Lemma \ \ref{RnaIntersessao})
\end{align*}
This is enough to conclude that $\gamma_3(\chi(G))=[D,G][L_1,G][L_2,G^{\varphi}].$

    Now, suppose that holds for $n\geq 3$, then

 \begin{align*}
    \gamma_{n+1}(\chi(G))=&\big[[D,_{n-1}G]\,[L_1,_{n-1}G]\,[L_2,_{n-1}G^{\varphi}],GL\big]\\
    =&[D,_{n}G]\,[L_1,_{n}G]  
     [[L_2,_{n-1}G^{\varphi}],G]\\
     &[[L_1,_{n-1}G],L]\,[[L_2,_{n-1}G^{\varphi}],L].
\end{align*}   

But, 
\[
[L_1,_{n-1}G,L]\leq [L_1,_{n}G]\,[[L_1,_{n-1}G],G^{\varphi}],
\]
and 
\[
[L_2,_{n-1}G^{\varphi},L]\leq [L_2,_{n}G^{\varphi}]\, [[L_2,_{n-1}G^{\varphi}],G].
\]
Additionally, by Lemma \ref{Lm:Rcontas}, $[L_1,_{n-1}G,G^{\varphi}]$ and $[L_2,_{n-1}G^{\varphi},G]$ are subgroups of $  [R,_{n-1}G]$. Finally, 
by Lemma \ref{RnaIntersessao}, $[R,_{n-1}G]\leq [D,_{n}G]$, and we can conclude that 
\[
\gamma_{n+1}(\chi(G))=[D,_{n-1}G][L_1,_{n-1}G][L_2,_{n-1}G^{\varphi}].  \qedhere
\] 
\end{proof}

\subsection{Higher terms of the lower central series of \texorpdfstring{$\chi(G)$}{χ(G)}}

In this subsection, we present an expression for $\gamma_{k}(\chi(G))$, when $G$ is a nilpotent group of class $c$ and $k \geq c+2$ (cf. \cite[Theorem 3.3]{GRS}).

\begin{prop}\label{PropR>D}
    Let $G$ be a nilpotent group of class $c$. Then, $
    [L_1,_{c}G]$ $[L_2,_{c}G^{\varphi}]$, $[D,_{c}G] \leq [R,_{c-1}G]$.
\end{prop}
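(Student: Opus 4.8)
\textbf{Proof plan for Proposition \ref{PropR>D}.}

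The plan is to reduce everything to the case $c$ by a shifting argument and then exploit the fact that in a nilpotent group of class $c$ the ``innermost'' commutators are central, so that the familiar relations of $\chi(G)$ collapse. First I would observe that it suffices to show the three inclusions
\[
[L_1,{}_{c}G]\leq [R,{}_{c-1}G],\qquad [L_2,{}_{c}G^{\varphi}]\leq [R,{}_{c-1}G],\qquad [D,{}_{c}G]\leq [R,{}_{c-1}G],
\]
and each of these in turn will follow once I show that a single generic generator of $[R,{}_{c-1}G]$ absorbs the relevant ``extra'' commutator. Concretely, a typical generator of $[L_1,{}_{c}G]$ has the shape $[\,[h,g],\varphi, x_1,\dots,x_{c-1}\,]$ with $g,h,x_i\in G$ (using $[L,G]$ is generated by things of the form $[\alpha,x]$ with $\alpha\in L$, and $L$ is generated by the $[g,\varphi]$); by Lemma \ref{Lm:Rcontas} I may freely move a $G$-entry past a $G^{\varphi}$-entry at the cost of landing in $[R,{}_{c-1}G]$, so the order of the $x_i$'s relative to the $\varphi$ does not matter modulo what we want.

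The heart of the argument is the base identity, which I expect to be the real content: I would show that for $g,h,x_1,\dots,x_{c-1}\in G$ one has
\[
[\,h,g,\varphi\,,x_1,\dots,x_{c-1}\,]\ \equiv\ [\,\varphi,h,g,x_1,\dots,x_{c-1}\,]^{\pm 1}\ \ \text{modulo }[R,{}_{c-1}G],
\]
i.e.\ the generator of $[L_1,{}_{c}G]$ differs from a generator of $[R,{}_{c-1}G]$ only by terms already in $[R,{}_{c-1}G]$, using that $[h,g]\in\gamma_{?}(G)$ is ``deep enough'' that its image together with the class-$c$ hypothesis forces the correction terms to vanish or to lie in $R$. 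The mechanism is the expansion of $[\varphi,g,h,a^{\varphi}]$ carried out in the proof of Lemma \ref{RnaIntersessao}: there $[\varphi,g,h,a^{\varphi}]=[g,h^{\varphi},a]^{[h,g]}[h,g,a^{\varphi}]$, and iterating this kind of decomposition $c-1$ times while using that $\gamma_{c+1}(G)=1$ kills the conjugation ``tails'' (each tail is a commutator of total $G$-weight exceeding $c$, hence trivial after projection, or it feeds back into $[R,{}_{c-1}G]$ via Lemma \ref{RnaIntersessao} and Lemma \ref{[L_1,G]normal}). For $[L_2,{}_{c}G^{\varphi}]$ the same computation applies after swapping the roles of $G$ and $G^{\varphi}$ via Lemma \ref{basic.chi}(i),(v). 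For $[D,{}_{c}G]$, I would start from a generator $[g,h^{\varphi},x_1,\dots,x_{c-1}]$ and use $[g,h^{\varphi}]=[g,h]\cdot[g,\varphi,h]^{-1}\cdot(\text{stuff})$ — more precisely Lemma \ref{l1} together with $[g,h]\in\gamma_2(G)$ being absorbed because $[D,{}_{c}G]$ meets $G$ only in class-$c$ commutators which are central — to rewrite it, modulo $[R,{}_{c-1}G]$, as a commutator of $[g,\varphi]$-type elements with $c$ copies of $G$, which is exactly a generator of $[L_1,{}_{c}G]$, already handled.

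The main obstacle I anticipate is bookkeeping the correction terms in the iterated expansion: each application of the commutator identity $[ab,x]=[a,x]^b[b,x]$ and of Lemma \ref{Lm:Rcontas} produces a conjugate of the main term together with a ``lower'' term, and I must argue at every one of the $c-1$ steps that the lower term either dies because of $\gamma_{c+1}(G)=1$ or is already in $[R,{}_{c-1}G]$ by normality (Lemma \ref{[L_1,G]normal}) and Lemma \ref{RnaIntersessao}. The clean way to package this is to induct on $c$: prove the statement for $c=1$ directly from Lemma \ref{RnaIntersessao} ($R\leq [D,G]\cap[L_1,G]\cap[L_2,G^{\varphi}]$ is exactly the displayed inclusions when $c=1$ is interpreted as $[L_1,G]\leq R$-type — in fact one checks $[L_1,G]\le R$ and $[D,G]\le R$ fail in general, so the true base is $c=2$, handled by one expansion step), and then pass from $c$ to $c+1$ by applying the class-$c$ result inside $G/Z(G)$ or, better, by adding one more commutator entry and re-running the single-step expansion, where the class hypothesis has been weakened by exactly one. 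I would also double-check the edge interaction with Lemma \ref{Lm:Rcontas}, which is what guarantees the $G$- and $G^{\varphi}$-entries can be permuted so that the three generating sets line up; this is routine but must be stated explicitly so the induction closes.
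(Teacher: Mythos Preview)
Your overall strategy matches the paper's: expand a generator via $[\varphi,g,h]=[g,h^{\varphi}][h,g]$ (Lemma~\ref{l1}), iterate $[ab,x]=[a,x]^{b}[b,x]$ through all the $G$-entries, use $\gamma_{c+1}(G)=1$ to kill the pure-$G$ tail, then invoke Lemma~\ref{Lm:Rcontas} to land in $[R,{}_{c-1}G]$; and you correctly reduce $[D,{}_{c}G]$ to $[L_1,{}_{c}G]$ exactly as the paper does.

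That said, two points deserve sharpening. First, your generator bookkeeping is off by one: since $L_1=[L,G]$ already absorbs one $G$, a generic element of $[L_1,{}_{c}G]$ is $[\varphi,g,h,a_1,\dots,a_{c}]$ with $c$ trailing entries, not $c-1$; your displayed ``base identity'' inherits this miscount and, as written, does not parse. Second, the induction on $c$ is an unnecessary detour---the suggestion to pass to $G/Z(G)$ does not work, because the statement lives in $\chi(G)$ and there is no lifting from $\chi(G/Z(G))$ back to $\chi(G)$. The paper avoids this entirely with a single direct computation valid for arbitrary $c$: after the iterated expansion one has two factors, a $D$-conjugate piece and $[h,g,a_1,\dots,a_{c}]$; in the latter, $[h,g,a_1,\dots,a_{c-1}]\in\gamma_{c+1}(G)=1$, so one may replace $a_{c}$ by $a_{c}^{\varphi}$ trivially, while in the former the replacement is free because $D$-elements see $G$ and $G^{\varphi}$ identically (Lemma~\ref{basic.chi}(ii)). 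Recombining gives $[\varphi,g,h,a_1,\dots,a_{c-1},a_{c}^{\varphi}]$, and then a single application of Lemma~\ref{Lm:Rcontas} moves $a_{c}^{\varphi}$ next to $h$, yielding an element of $[R,{}_{c-1}G]$. This is the crisp step your ``correction terms die or feed back'' paragraph is gesturing at; stating it this way makes the argument a two-line computation rather than an induction.
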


\begin{proof}First, we will prove that $[L_1,_{c}G] \leq [R,_{c-1}G]$. 
    In fact, let $g,h,a_1, \ldots , a_{c-1}\in G$ and $x=[\varphi,g,h,a_1,\ldots ,a_{c}]\in [L_1,_cG]$. Then, 
    \begin{align*}
        x=& \big[[g,h^{\varphi}][h,g],a_1,\ldots ,a_{c}\big]\\
        =& \big[[g,h^{\varphi},a_1]^{[h,g]}[h,g,a_1],a_2, \ldots ,a_{c}\big]\\
         =& \big[ \, [[g,h^{\varphi},a_1]^{[h,g]},a_2]^{[h,g,a_1]} [h,g,a_1,a_2], a_3, \ldots ,a_{c}\big].
        \end{align*}
    Inductively,  we can obtain that
        \begin{align*}
        x=&\big [ [\ldots[[g,h^{\varphi},a_{1}]^{[h,g]},a_{2}]^{[g,h,a_{1}]}, \ldots ,a_{c-1}]^{[g,h,a_{1}, \ldots ,a_{c-2}]} , a_c \big]^{[g,h,a_{1}, \ldots, a_{c-1}]}\\
        & [h,g,a_1, \ldots ,a_{c-1}, a_{c}]. 
        \end{align*}
        
        Observe that $[h,g,a_1, \ldots ,a_{c-1}]\in \gamma_{c+1}(G)=1$, which implies that we can rewrite $[h,g,a_1, \ldots ,a_{c-1}, a_{c}]$ as $[h,g,a_1, \ldots ,a_{c-1}, a^{\varphi}_{c}]$. Moreover, once $[g,h^{\varphi}]\in D$ we can also rewrite 
        \[
\big [ [\ldots[[g,h^{\varphi},a_{1}]^{[h,g]},a_{2}]^{[g,h,a_{1}]}, \ldots ,a_{c-1}]^{[g,h,a_{1}, \ldots ,a_{c-2}]} , a_c \big]^{[g,h,a_{1}, \ldots, a_{c-1}]},
        \]
        
        as
        \[
\big [ [\ldots[[g,h^{\varphi},a_{1}]^{[h,g]},a_{2}]^{[g,h,a_{1}]}, \ldots ,a_{c-1}]^{[g,h,a_{1}, \ldots ,a_{c-2}]} , a^{\varphi}_c \big]^{[g,h,a_{1}, \ldots, a_{c-1}]}.       
        \]
        Therefore, 
        \begin{align*} x=&\big [ [\ldots[[g,h^{\varphi},a_{1}]^{[h,g]},a_{2}]^{[g,h,a_{1}]}, \ldots ,a_{c-1}]^{[g,h,a_{1}, \ldots ,a_{c-2}]} , a^{\varphi}_c \big]^{[g,h,a_{1}, \ldots, a_{c-1}]} \\
        & [h,g,a_1, \ldots , a_{c-1}, a^{\varphi}_{c}]\\
        =& [\varphi,g,h,a_1,\ldots, a_{c-1},a^{\varphi}_{c}]\\
        =& [\varphi,g,h,a^{\varphi}_{c},a_1,a_2,\ldots, a_{c-1}] \ \ \ (Lemma\  \ref{Lm:Rcontas}).
    \end{align*}
    This is sufficient to conclude that $[L_1,_cG]\leq [R,_{c-1}G]$. The case of $[L_2,_cG^{\varphi}]$ follows analogously.  
Now, let $y=[g^{\varphi},h,a_1, \ldots, a_c]\in [D,_cG]$
    \begin{align*}
       y=& [[\varphi,h,g][g,h], a_1, \ldots, a_c]\\
        =& [[\varphi,h,g, a_1]^{[g,h]}[g,h,a_1],a_2, \ldots, a_c]\\
        =& [[[\varphi,h,g, a_1]^{[g,h]},a_2]^{[g,h,a_1]}[g,h,a_1,a_2],a_3, \ldots, a_c].
    \end{align*}
Inductively, we have that 
    \begin{align*}    
         y =&\big [ [\ldots[[\varphi,h,g, a_1]^{[g,h]},a_2]^{[g,h,a_1]}, \ldots ,a_{c-1}]^{[g,h,a_{1}, \ldots ,a_{c-2}]} , a_c \big]^{[g,h,a_{1}, \ldots, a_{c-1}]} \\
         & [g,h,a_1, \ldots, a_c]\\
        =& \big [ [\ldots[[\varphi,h,g, a_1]^{[g,h]},a_2]^{[g,h,a_1]}, \ldots ,a_{c-1}]^{[g,h,a_{1}, \ldots ,a_{c-2}]} , a_c \big]^{[g,h,a_{1}, \ldots, a_{c-1}]}.
    \end{align*}
Therefore, $[D,_cG]\leq [L_1,_cG]\leq [R,_{c-1}G]$.
\end{proof}

\begin{cor}\label{Cor:GammaCh}
Let $G$ be a nilpotent group of class $c$. Then 
\[
\gamma_{n+2}(\chi(G)) = [R,_{n-1}G] = [D,_nG]=[L_1,_nG]=[L_2,_nG^{\varphi}],
\]
for every $n\geq c$.
\end{cor}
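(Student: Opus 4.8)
The plan is to combine Proposition \ref{EstruturaGamma(chi)} with Proposition \ref{PropR>D} and Lemma \ref{RnaIntersessao}, so the heart of the argument is a chain of inclusions that collapses to equalities. Fix $n \geq c$ and apply Proposition \ref{EstruturaGamma(chi)} with index $n+2$; since $n+2 \geq 3$ this gives
\[
\gamma_{n+2}(\chi(G)) = [D,_nG]\,[L_1,_nG]\,[L_2,_nG^{\varphi}].
\]
By Proposition \ref{PropR>D} each of the three factors $[D,_cG]$, $[L_1,_cG]$, $[L_2,_cG^{\varphi}]$ is contained in $[R,_{c-1}G]$; iterating the commutator with $G$ (resp. $G^{\varphi}$) a further $n-c$ times — and using that $[R,_{c-1}G]$ is a normal subgroup, which follows from Lemma \ref{RnaIntersessao} together with the normality statements in Lemma \ref{[L_1,G]normal} — we obtain
\[
[D,_nG],\ [L_1,_nG],\ [L_2,_nG^{\varphi}] \ \leq\ [R,_{n-1}G].
\]
Hence the product of the three factors is contained in $[R,_{n-1}G]$, so $\gamma_{n+2}(\chi(G)) \leq [R,_{n-1}G]$.

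For the reverse inclusion, Lemma \ref{RnaIntersessao} gives $[R,_{n-1}G] \leq [D,_nG] \cap [L_1,_nG] \cap [L_2,_nG^{\varphi}]$, which is in particular contained in the product $[D,_nG]\,[L_1,_nG]\,[L_2,_nG^{\varphi}] = \gamma_{n+2}(\chi(G))$. Therefore $\gamma_{n+2}(\chi(G)) = [R,_{n-1}G]$, and moreover the same two displays force
\[
[D,_nG] = [L_1,_nG] = [L_2,_nG^{\varphi}] = [R,_{n-1}G],
\]
since each of these three subgroups is sandwiched between $[R,_{n-1}G]$ (by Lemma \ref{RnaIntersessao}) and $[R,_{n-1}G]$ (by the iterated Proposition \ref{PropR>D} bound). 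This proves all the claimed equalities.

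I expect the only genuine subtlety to be the inductive passage from the class-$c$ statement of Proposition \ref{PropR>D} to the index-$n$ version: one must check that repeatedly commuting $[D,_cG] \leq [R,_{c-1}G]$ with elements of $G$ preserves the inclusion into $[R,_{c-1}G]$ after the same number of steps, i.e. that $[[R,_{c-1}G],_{n-c}G] = [R,_{n-1}G]$, which is just associativity of the lower-central-type bracket once one knows $[R,_{c-1}G] \trianglelefteq \chi(G)$. A small point worth spelling out is why $[D,_nG] \leq [R,_{n-1}G]$ rather than merely $[D,_nG] \leq [[R,_{c-1}G],_{n-c}G^{\varphi}]$ — here one uses Lemma \ref{Lm:Rcontas} to freely swap $G$ and $G^{\varphi}$ entries past the point where the commutator already lies in $R$. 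Everything else is bookkeeping with the inclusions already established.
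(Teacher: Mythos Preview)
Your proof is correct and follows exactly the route the paper takes: the paper's own proof is the single sentence ``Combining Lemma \ref{RnaIntersessao}, Proposition \ref{EstruturaGamma(chi)} and Proposition \ref{PropR>D} we conclude the proof,'' and you have simply spelled out that combination. One small slip in your final paragraph: the $G$-versus-$G^{\varphi}$ issue you flag arises for $[L_2,{}_nG^{\varphi}]$ (not $[D,{}_nG]$), and the cleanest resolution is that $[R,G]=[R,G^{\varphi}]$ because $R\leq W$ is centralized by $L$, so $[r,g^{\varphi}]=[r,g\cdot g^{-1}g^{\varphi}]=[r,g]$ for every $r\in R$.
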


\begin{proof}
Combining Lemma \ref{RnaIntersessao}, Proposition \ref{EstruturaGamma(chi)} and Proposition \ref{PropR>D} we conclude the proof.
\end{proof}

\begin{remar}
According to \cite[Theorem 3.3]{GRS}, we deduce that $\gamma_{c+3}(\chi(G))$ is elementary abelian $2$-group. Applying Corollary \ref{Cor:GammaCh} to $\gamma_{c+3}(\chi(G))$, we obtain that \[\gamma_{c+3}(\chi(G))^2= [R,_c G]^2=1.\]   
\end{remar}

\section*{Acknowledgements}

We are grateful to Luis Mendon\c ca and Said Sidki for useful discussions on weak commutativity. This work was partially supported by FAPDF, CAPES and CNPq. The first author acknowledges the support of the CNPq projects \textit{Produtividade em Pesquisa} (Project No.: 303191/2022-8) and \textit{P\'os-doutorado no Exterior} (Project No.: 200116/2025-8). The last author was financed in part by the CAPES -- Finance Code 001. 

\end{document}